\theoremstyle{plain}
\newtheorem{theorem}{Theorem}[section]
\newtheorem{corollary}[theorem]{Corollary}
\newtheorem{lemma}[theorem]{Lemma}
\newtheorem{proposition}[theorem]{Proposition}
\theoremstyle{definition}
\theoremstyle{remark}
\newtheorem{remark}[theorem]{Remark}
\newtheorem{example}[theorem]{Example}
\numberwithin{equation}{section}
\numberwithin{theorem}{section}
\title[Representation of integers by hermitian forms]{An asymptotic formula for representations of integers by indefinite hermitian forms}
\author{Emilio A. Lauret}
\address{FaMAF--CIEM \\ Universidad Nacional de C\'ordoba. X5000HUA  --- C\'ordoba, Argentina}
\email{elauret@famaf.unc.edu.ar}
\subjclass[2010]{Primary 11D45, 11E39; Secondary 58C40}
\keywords{Representation by hermitian forms, hyperbolic lattice point theorem}
\thanks{Supported by Conicet and Secyt-UNC}
\begin{document}
\begin{abstract}
We fix a maximal order $\mathcal O$ in $\mathbb{F}=\mathbb{R},\mathbb{C}$ or $\mathbb{H}$, and an $\mathbb{F}$-hermitian form $Q$ of signature $(n,1)$ with coefficients in $\mathcal O$.
Let $k\in\mathbb{N}$.
By applying a lattice point theorem on $n$-dimensional $\mathbb{F}$-hyperbolic space, we give an asymptotic formula with an error term, as $t\to+\infty$, for the number $N_t(Q,-k)$ of integral solutions $x\in\mathcal O^{n+1}$ of the equation $Q[x]=-k$ satisfying $|x_{n+1}|\leq t$.
\end{abstract}

\maketitle

\section{Introduction}
The representation theory by quadratic forms has a long history.
It starts with the qualitative problem of determining which integers are represented by a given quadratic form.
For example, Fermat, Legendre and Lagrange dealt with the problem of representation as a sum of two, three and four squares respectively.
After that, the quantitative problem was considered by Jacobi, Kloosterman and Liouville among others.
For instance, Jacobi proved that the number of ways a positive integer $k$ can be written as a sum of four squares is
$$
\displaystyle{8\sum_{m|k,\; 4\nmid m} m},
$$
by determining the Fourier coefficients of the theta function associated to the form $x_1^2+\dots+x_4^2$.

These examples are for positive definite quadratic forms.
For indefinite forms, the literature is much less abundant than for the definite case.
In the present work, we study the quantitative problem for quadratic and hermitian forms of signature $(n,1)$.

Let $\mathbb{F}=\mathbb{R},\mathbb{C}$ or $\mathbb{H}$, and let $\mathcal O$ denote a maximal order in $\mathbb{F}$.
Thus $\mathcal O$ is the ring of integer numbers $\mathbb{Z}$ in the real case, the ring of integers of an imaginary quadratic extension of $\mathbb{Q}$ in the complex case (e.g.\ the Gaussian integers $\mathbb{Z}[\sqrt{-1}]$), and, for instance, the ring of Hurwitz integers if  $\mathbb{F}=\mathbb{H}$, though there are many other choices.
We consider in $\mathbb H$ (and then by restriction in $\mathbb{C}$) the canonical involution $\alpha\mapsto\bar\alpha$.
An \emph{$\mathbb{F}$-hermitian matrix} is a square matrix with coefficients in $\mathbb{F}$ that is equal to its own conjugate transpose.

We consider an $\mathbb{F}$-hermitian matrix
\begin{equation}\label{eq:Q}
  Q=\begin{pmatrix}
    A&\\&-a
  \end{pmatrix},
\end{equation}
with $a\in\mathbb{N}$ and $A\in\mathrm{M}_n(\mathcal O)$ a positive definite $\mathbb{F}$-hermitian matrix.
We also denote by $Q$ the induced $\mathbb{F}$-hermitian form of signature $(n,1)$,
$$
Q[x]:= x^*Qx  = A[\hat x] - a\,|x_{n+1}|^{2},\qquad x\in\mathbb{F}^{n+1},
$$
where $\hat x=(x_1,\dots,x_n)^t\in\mathbb{F}^n$.

We consider, for $k\in\mathbb{N}$,  the solution vectors $x\in\mathcal O^{n+1}$ of the equation
\begin{equation}\label{eq:Q[x]=k}
Q[x]=-k.
\end{equation}
Put
\begin{equation}\label{eq:R(Q,k)}
  \mathcal R(Q,-k) =\{x\in\mathcal O^{n+1}:Q[x]=-k\}.
\end{equation}
Since $Q$ is an indefinite form, this set is either empty or infinite.
From now on, we will assume it is not empty, i.e.\ that $-k$ is represented by $Q$.
Consequently, in order to study the quantitative behavior, one needs to impose additional restrictions to the integral solutions of \eqref{eq:Q[x]=k}.

A natural condition is to intersect the set in \eqref{eq:R(Q,k)} with Euclidean balls $\{x\in\mathbb{F}^{n+1}: \|x\|\le s\}$ in $\mathbb{F}^{n+1}$, with $\|x\|^2 := A[\hat x]+a\,|x_{n+1}|^{2}$.
Note that this norm is induced by the positive definite $\mathbb{F}$-hermitian matrix $\left(\begin{smallmatrix}A&\\&a\end{smallmatrix}\right)$.

A related condition consists of requiring that the solutions of \eqref{eq:Q[x]=k} satisfy the bound $|x_{n+1}|\leq t$.
We have, for $x\in\mathcal R(Q,-k)$, that
\begin{equation}\label{eq:relation}
\|x\|\leq s \quad \text{if and only if} \quad |x_{n+1}| \leq \sqrt{\tfrac{s^2+k}{2}}.
\end{equation}
In this paper we establish an asymptotic formula, for large $t$, for
\begin{equation}\label{eq:N_t(Q,k)}
  N_t(Q,-k):=\#\{x\in\mathcal R(Q,-k):|x_{n+1}|\leq t\},
\end{equation}
the number of integral solutions of the equation $Q[x]=-k$ satisfying  the additional restriction $|x_{n+1}|\leq t$.

Our main result, Theorem~\ref{thm:main4}, is the asymptotic formula
\begin{equation}\label{eq:intro-main}
    N_t(Q,-k)=
    \frac{2^{(r-1)(n+1)}}{|d_{\mathcal O}|^{\frac{n+1}2}}
    \frac{a^\rho\,\mathrm{vol}(S^{nr-1})}{2\rho\,|\det Q|^{\frac{r}{2}}}
    \frac{\pi^{\frac{r}{2}}}{\Gamma(\frac{r}{2})}\;
    \delta(Q,-k)\;
    t^{2\rho} + O(t^{\tau}),
\end{equation}
as $t\to+\infty$, for $\mathbb{F}=\mathbb{R}$ or $\mathbb{C}$.
Here $r=\dim_\mathbb{R}(\mathbb{F})$, $\rho=(n+1)\,r/2-1$, $d_{\mathcal O}$ is the discriminant of the quotient field of $\mathcal O$ and $\delta(Q,-k)$ is the local density of the representation \eqref{eq:Q[x]=k} (see~\eqref{eq:densidad}).
The number $\tau$ is defined in \eqref{eq:tau}.
It depends only on the form $Q$, or more precisely, on the first nonzero eigenvalue of the Laplace-Beltrami operator on $\Gamma_Q^0\backslash \mathrm H_\mathbb{F}^n$, where $\Gamma_Q^0$ is a subgroup of the group of unimodular matrices (see~\ref{eq:Gamma_Q^0}) and $\mathrm H_\mathbb{F}^n$ is the $n$-dimensional $\mathbb{F}$-hyperbolic space.
When $\mathbb{F}=\mathbb{R}$ and $n\geq3$, formula \eqref{eq:intro-main} holds with $\tau=n-3/2$ (note that $2\rho=n-1$).

In the particular case when $\mathbb{F}=\mathbb{R}$ and $Q=I_{n,1}=\left(\begin{smallmatrix}I_n&\\&-1\end{smallmatrix}\right)$, the main theorem asserts that the number $N_t(I_{n,1},-k)$ of vectors $x\in\mathbb{Z}^{n+1}$ such that
$$
x_1^2+\dots+x_n^2-x_{n+1}^2=-k
\qquad\text{and}\qquad
|x_{n+1}|\leq t,
$$
satisfies the following asymptotic estimate, as $t\to+\infty$,
$$
N_t(I_{n,1},-k)=
    \frac{\mathrm{vol}(S^{n-1})}{n-1}\,
    \delta(I_{n,1},-k)\,
    t^{n-1} + O(t^{n-3/2}).
$$
This formula is due to J.~Ratcliffe and S.~Tschantz \cite{Ratcliffe-Tschantz}.
The present article was inspired by this work.
They also considered $k<0$ obtaining the same formula for the leading coefficient without an error term.
Although a similar result should hold for $k<0$ in our context, we will not develop this case here since the tools needed are very different.

Concerning the question of counting integral solutions inside Euclidean balls, we define
\begin{equation}\label{eq:intro-N_s(Q,k)}
\widetilde N_s(Q,-k):=\#\{x\in\mathcal R(Q,-k):\|x\|\leq s\}.
\end{equation}
By applying formula \eqref{eq:intro-main} and relation \eqref{eq:relation} (see Remark~\ref{rmk:norms}), one shows that
\begin{equation}\label{eq:intro-main-ball}
\widetilde N_s(Q,-k) = 2^{-\rho}\,C_{Q,k} \; s^{2\rho} + O(s^{\tau}),
\end{equation}
where $C_{Q,k}$ denotes the main coefficient in \eqref{eq:intro-main}.
Related results for the leading term of the counting function of integral points in quite general algebraic varieties lying in balls of increasing radius as  $s\to +\infty$, have been obtained, for example, in \cite{Duke-Rudnick-Sarnak} and \cite{Borovoi-Rudnick}.

In the absence of nonzero exceptional eigenvalues, formulas \eqref{eq:intro-main} and \eqref{eq:intro-main-ball} become respectively
\begin{align*}
           N_t(Q,-k) =&\, C_{Q,k}\; t^{2\rho} + O\left(t^{2\rho(1-\frac{1}{n+1})+\varepsilon}\right),\\
\widetilde N_s(Q,-k) =&\, 2^{-\rho}\, C_{Q,k}\; s^{2\rho} + O\left(s^{2\rho(1-\frac{1}{n+1})+\varepsilon}\right),
\end{align*}
with $\varepsilon=0$ if $\mathbb{F}=\mathbb{R}$ and any $\varepsilon>0$ if $\mathbb{F}=\mathbb{C}$. We recall that $2\rho = r(n+1)-2$, where $r=\dim_\mathbb{R}(\mathbb{F})$.

Our main tool in this paper is the hyperbolic lattice point theorem of P.~Lax and R.~Phillips \cite{Lax-Phillips} in the real case (improved by B.~M.~Levitan \cite{Levitan}), and of R.~Bruggeman, R.~Miatello and N.~Wallach \cite{Bruggeman-Miatello-Wallach} in the general case ($\mathbb{F}=\mathbb{R},\mathbb{C}$ and $\mathbb{H}$) .
For $\mathbb{F}=\mathbb{R}$ we use the best lower bound known for the first eigenvalue of the Laplace-Beltrami operator on $\Gamma_Q^0\backslash\mathrm H_\mathbb{R}^n$ (see Theorem~\ref{thm:lambda_1}), which was obtained in \cite{EGMKloosterman} and in \cite{Cogdell-Li-Piatetski-Shapiro-Sarnak}.

After applying the lattice point theorem we obtain (see Proposition \ref{prop:main3})
\begin{equation}\label{eq:intro-N_t-afterLPT}
N_t(Q,-k)= C_{Q,k}'\; \Big(\sum_{y\in F} |\Gamma_{Q,y}|^{-1}\Big)\;t^{2\rho} + O(t^\tau),
\end{equation}
where $C_{Q,k}'$ is a constant depending only on $Q$ and $k$, $F$ is a representative set of $\mathcal R(Q,-k)$ under the action of the group of unimodular matrices $\Gamma_Q$ (see~\ref{eq:Gamma_Q}) and $\Gamma_{Q,y}$ is the stabilizer of $y$ in $\Gamma_Q$.

When $\mathbb{F}=\mathbb{R}$ or $\mathbb{C}$, by using the theory of C.~L.~Siegel \cite{SiegelIndefinite} for indefinite quadratic forms and its generalization for indefinite complex hermitian forms given in \cite{Raghavan}, formula \eqref{eq:intro-N_t-afterLPT} can be made more explicit, yielding our main formula \eqref{eq:intro-main}. This is carried out by expressing the term $\sum_{y\in F} |\Gamma_{Q,y}|^{-1}$ as a product of the local density $\delta(Q,-k)$ times a constant which depends only on $Q$ and $k$ (see Corollary~\ref{cor:term}).
The derivation of a formula like \eqref{eq:intro-main} from \eqref{eq:intro-N_t-afterLPT} in the quaternionic case, would first require developing for indefinite $\mathbb H$-hermitian forms the classical theory due to Siegel, as done by Raghavan (see \cite{Raghavan}) for indefinite $\mathbb{C}$-hermitian forms.

The paper is organized as follows.
In Section~2 we introduce the geometric context, relating $N_t(Q,-k)$ with the number of elements in an arithmetic subgroup of $\operatorname{Iso}(\mathrm H_\mathbb{F}^n)$ satisfying a geometric condition.
In Section~3, we apply the lattice point theorem to count such lattice points.
Section~4 uses Siegel's theory to compute the main term of the formula.
We conclude with Section~5, which contains the main theorem, together with some examples and remarks.

\section{$\mathbb{F}$-Hyperbolic space}\label{sec:hyperbolic-spaces}
Throughout the paper, given $R$ a ring with identity, we denote by $\mathrm{M}(m,n;R)$ the set of $m\times n$ matrices with coefficients in $R$, just $\mathrm{M}(m,R)$ when $n=m$,
by $\mathrm{GL}(m,\mathbb{F})$ the \emph{general linear group} and by $\mathrm{SL}(m,\mathbb{F})$ its derived group, the \emph{special linear group}.
For a matrix $C\in\mathrm{M}(m,l;\mathbb{F})$ we denote by $C^*$ its conjugate transpose and write $B[C]=C^*BC$, where $B\in\mathrm{M}(m,\mathbb{F})$.
Let $R^m$ denote the right $R$-module $\mathrm{M}(m,1;R)$.
For $\mathbb{F}=\mathbb{R},\mathbb{C}$ and $\mathbb H$, let $\mathrm P\mathbb F^{n}$ be the $n$-dimensional projective space over $\mathbb{F}$, i.e.\ $\mathrm P\mathbb F^{n}=\mathbb{F}^{n+1}\smallsetminus\{0\}/\mathbb{F}^{\times}$ where $\mathbb{F}^\times$ denotes the nonzero elements of $\mathbb{F}$.

We now introduce a model for Riemannian symmetric spaces of real rank one and negative curvature (leaving out the Cayley plane).
These are the real, complex and quaternionic hyperbolic spaces.
For a general reference on this subject see \cite[II.\S10]{Bridson-Haefliger} and \cite[\S19]{Mostow}.

Let $Q$ be the matrix defined as in \eqref{eq:Q}. The set
\begin{equation}\label{eq:H_F^n}
  \mathrm H_\mathbb{F}^n(Q) = \left\{[x]\in \mathrm P\mathbb F^{n} : Q[x]<0\right\}
\end{equation}
will serve as the set of points for the \emph{$Q$-Kleinian model} of $n$-dimensional $\mathbb{F}$-hyperbolic geometry.
Note that the condition $Q[x]<0$ is well defined on the projective space.
The distance function is defined by
\begin{equation}\label{eq:dist}
  \cosh(d([x],[y])) = \frac{|Q(x,y)|}{|Q[x]|^{1/2}\,|Q[y]|^{1/2}},
\end{equation}
where $Q(x,y)=x^*Qy$.

We consider the $\mathbb{F}$-vector space $\mathbb{F}^{n+1}$ endowed with the form $Q(x,y)=x^*Qy$ of type $(n,1)$.
Let $x^\bot= \{u\in\mathbb{F}^{n+1}:Q(x,u)=0\}$ be the \emph{$Q$-orthogonal complement} of $x\in\mathbb{F}^{n+1}$.
If $Q[x]<0$, then the restriction of $Q$ to $x^\bot$ is positive definite.
We identify $x^\bot$ with $T_{[x]}\mathrm H_\mathbb{F}^n(Q)$ using the differential of the natural projection $\mathbb{F}^{n+1}\smallsetminus\{0\}\to\mathrm H_\mathbb{F}^n(Q)$.
We consider the symmetric positive definite $\mathbb{R}$-bilinear form on $T_{[x]}\mathrm H_\mathbb{F}^n(Q)$ given by
\begin{equation}\label{eq:inner_prodct-T_[x]H}
(u,v)= \frac{\mathrm{Re}\left( Q(u,v) \right)}{|Q[x]|},\qquad x,y\in x^\bot.
\end{equation}
In this way $\mathrm H_\mathbb{F}^n(Q)$ is naturally a Riemannian manifolds.
One can check that the metric associated is \eqref{eq:dist}.
Moreover, this metric gives constant curvature $-1$ in the real case, and pinched sectional curvature in the interval $[-4,-1]$ if $\mathbb{F}=\mathbb{C},\mathbb{H}$.

We denote by
$$
\mathrm{U}(Q,\mathbb{F})=\{g\in\mathrm{GL}(n+1,\mathbb{F}):Q[g]=Q\}
$$
the \emph{$Q$-unitary group} and by $\mathrm{SU}(Q,\mathbb{F})=\mathrm{U}(Q,\mathbb{F})\cap \mathrm{SL}(n+1,\mathbb{F})$ the \emph{special $Q$-unitary group}.
For $Q=I_{n,1}$, it is well known the classical notation
$\mathrm{U}(I_{n,1},\mathbb{F}) = \mathrm{O}(n,1)$, $\mathrm{U}(n,1)$, $\mathrm{Sp}(n,1)$ and $\mathrm{SU}(I_{n,1},\mathbb{F}) = \mathrm{SO}(n,1)$, $\mathrm{SU}(n,1)$, $\mathrm{Sp}(n,1)$
for $\mathbb{F}=\mathbb{R},\mathbb{C},\mathbb{H}$ respectively.
The center of the $Q$-unitary group is given by
\begin{equation}\label{eq:Z}
  Z(\mathrm{U}(Q,\mathbb{F}))=
  \begin{cases}
    \{\pm I_{n+1}\} & \text{if $\mathbb{F}=\mathbb{R}$},\\
    \{z I_{n+1}:|z|=1\}\cong S^1 & \text{if $\mathbb{F}=\mathbb{C}$},\\
    \{\pm I_{n+1}\} & \text{if $\mathbb{F}=\mathbb{H}$}.
  \end{cases}
\end{equation}

The group $\mathrm{U}(Q,\mathbb{F})$ acts transitively on $\mathrm H_\mathbb{F}^n(Q)$ by
\begin{equation}\label{eq:vartheta}
  [x]\mapsto g\cdot[x]=[gx].
\end{equation}
Indeed, by the distance formula \eqref{eq:dist}, its elements act by isometries.
Let $\operatorname{Iso}(\mathrm H_\mathbb{F}^n(Q))$ (resp.\ $\operatorname{Iso}^+(\mathrm H_\mathbb{F}^n(Q))$) denote the set of isometries (resp.\ orientation-preserving isometries) of $\mathrm H_\mathbb{F}^n(Q)$.
It is clear that the elements of $Z(\mathrm{U}(Q,\mathbb{F}))$ act as the identity map on $\mathrm H_\mathbb{F}^n(Q)$.
Moreover, we have that
$$
\{1\}
\xrightarrow{\makebox[6mm]{}}
Z(\mathrm{U}(Q,\mathbb{F}))
\xrightarrow{\makebox[6mm]{$\iota$}}
\mathrm{U}(Q,\mathbb{F})
\xrightarrow{\makebox[6mm]{$\vartheta$}}
\operatorname{Iso}(\mathrm H_\mathbb{F}^n(Q))
$$
is an exact sequence, where $\iota$ denotes the inclusion map and $\vartheta$ is defined by \eqref{eq:vartheta}.
Furthermore, the group $\mathrm{PU}(Q,\mathbb{F}):=\mathrm{U}(Q,\mathbb{F})/Z(\mathrm{U}(Q,\mathbb{F}))$ is, up to finite index, the full isometry group $\operatorname{Iso}(\mathrm H_\mathbb{F}^n(Q))$.

\begin{remark}\label{rmk:isometry_groups}
When $\mathbb{F}=\mathbb{R}$ the group $\mathrm{O}(Q):=\mathrm{U}(Q,\mathbb{R})$ has four connected components.
The identity connected component is $$\mathrm{PSO}(Q):=\{g\in\mathrm{SO}(Q):g_{n+1,n+1}>0\},$$ which is isomorphic to $\operatorname{Iso}^+(\mathrm H_\mathbb{R}^n(Q))$.

When $\mathbb{F}=\mathbb{C}$, the group $\operatorname{Iso}(\mathrm H_\mathbb{C}^n(Q))$ is generated by $\mathrm{PU}(Q,\mathbb{C})$ and the conjugation $[x_1,\dots,x_{n+1}]\to [\bar x_1,\dots,\bar x_{n+1}]$. If $\mathbb{F}=\mathbb{H}$, then $\operatorname{Iso}(\mathrm H_\mathbb{H}^n(Q))\cong\mathrm{PU}(Q,\mathbb{H})$ for $n>1$.
\end{remark}

We fix a \emph{maximal order} $\mathcal O$ in $\mathbb{F}$; this means that the subset $\mathcal O\subset\mathbb{F}$ satisfies the following conditions:
  \begin{enumerate}
    \item[(i)] $\mathcal O$ is a lattice in $\mathbb{F}$ (there is an $\mathbb{R}$-basis $v_1,\dots,v_{r}$ of $\mathbb{F}$ such that $\mathcal O=\mathbb{Z} v_1\oplus\dots\oplus \mathbb{Z} v_{r}$);
    \item[(ii)] $\mathcal O$ is a subring of $\mathbb{F}$ containing $1$;
    \item[(iii)] $2\mathrm{Re}(a)\in\mathbb{Z}$ and $a\bar a\in\mathbb{Z}$ for $a\in\mathcal O$;
  \end{enumerate}
and $\mathcal O$ is maximal among all orders (subsets of $\mathbb{F}$   satisfying (i)-(iii)). Here $r:=\dim_\mathbb{R}(\mathbb{F})$.

The elements of $\mathcal O$ will be called \emph{integers}. The ring of integers $\mathbb{Z}$ is the only order in $\mathbb{R}$.
In $\mathbb{C}$, the maximal orders are the rings of integers of the imaginary quadratic extensions $\mathbb{Q}(\sqrt{-D})$ ($D>0$ squarefree) of $\mathbb{Q}$.
More precisely, they are $\mathbb{Z}[\omega]$ where $\omega=(1+\sqrt{-D})/2$ if $-D\equiv 1\pmod4$ and $\omega=\sqrt{-D}$ otherwise.
When $\mathbb{F}=\mathbb{H}$ there are many orders.
As a canonical example in this case, the reader may take
$$\mathcal O=\left\{a+bi+cj+dk\in\mathbb{H}:a,b,c,d\in\mathbb{Z}\text{ or } a,b,c,d\in\mathbb{Z}+\tfrac12\right\},$$
the \emph{Hurwitz integers}.

Let $\Gamma_Q$ be the set of unimodular matrices in $\mathrm{U}(Q,\mathbb{F})$, that is
\begin{equation}\label{eq:Gamma_Q}
\Gamma_Q=\mathrm{U}(Q,\mathbb{F})\cap\mathrm{M}(n+1,\mathcal O).
\end{equation}
This is a discrete subgroup of $\mathrm{U}(Q,\mathbb{F})$ with finite center.
The action of $\Gamma_Q$ on $\mathrm H_\mathbb{F}^n(Q)$ is discontinuous, not free and the quotient $\Gamma_Q\backslash \mathrm H_\mathbb{F}^n(Q)$ is of finite volume and not compact.

On the other hand, the group $\Gamma_Q$ acts by left multiplication on the set $\mathcal R(Q,k)$ given in \eqref{eq:R(Q,k)}.

\begin{lemma}
The set of $\Gamma_Q$-orbits in $\mathcal R(Q,k)$ is finite.
\end{lemma}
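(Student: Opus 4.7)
My plan is to translate the counting problem for $\mathcal R(Q,k)$-orbits into a discreteness and relative compactness argument on the finite-volume quotient $\Gamma_Q\backslash \mathrm H_\mathbb{F}^n(Q)$. For concreteness I carry out the case $k<0$ (the case relevant for the rest of the paper); the case $k>0$ is handled analogously using the totally geodesic $Q$-orthogonal complement of a spacelike representative.

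Step one is the map $\phi\colon \mathcal R(Q,k)\to \mathrm H_\mathbb{F}^n(Q)$, $x\mapsto [x]$, which is $\Gamma_Q$-equivariant. Its fibres $\phi^{-1}([x])=\{x\lambda:\lambda\in\mathbb{F}^\times,\ |\lambda|^2=1,\ x\lambda\in\mathcal O^{n+1}\}$ are uniformly finite, bounded by the number of units of $\mathcal O$ of absolute value one. Hence it suffices to show the image $\phi(\mathcal R(Q,k))/\Gamma_Q$ is finite. Step two is to observe that $\phi(\mathcal R(Q,k))$ is discrete in $\mathrm H_\mathbb{F}^n(Q)$: indeed $\mathcal R(Q,k)$ is discrete in the closed hyperboloid $\{x\in\mathbb{F}^{n+1}:Q[x]=k\}$, and the projection from that hyperboloid onto $\mathrm H_\mathbb{F}^n(Q)$ is a principal bundle with compact fibre. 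Consequently its image in $\Gamma_Q\backslash\mathrm H_\mathbb{F}^n(Q)$ is a discrete subset of a finite-volume space; finiteness will follow once I show this image lies inside a compact subset of the quotient.

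The hard step, and the main obstacle, is ruling out accumulation at the finitely many cusps of $\Gamma_Q\backslash \mathrm H_\mathbb{F}^n(Q)$. Each cusp is represented by a primitive null vector $v\in\mathcal O^{n+1}$ with $Q[v]=0$. If $[x_m]$ converges to such a cusp with $x_m\in\mathcal R(Q,k)$, then after scaling, $x_m$ tends in $\mathbb{F}^{n+1}$ to a scalar multiple of $v$, and together with $Q[x_m]=k\neq 0$ this forces $\|x_m\|\to\infty$. The standard cuspidal-reduction argument then applies: the stabilizer $(\Gamma_Q)_v$ contains a lattice in the horospherical (unipotent) subgroup of $\mathrm U(Q,\mathbb{F})$ fixing $v$, and translating $x_m$ by this lattice pushes $[x_m]$ across a horoball boundary into the thick part of a Siegel fundamental set. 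Since the thick part is compact, only finitely many $\mathcal O^{n+1}$-vectors of $Q$-value $k$ can have their $\phi$-image there, giving the desired finiteness. This parabolic reduction at the cusps is where the arithmetic structure of $\Gamma_Q$ (beyond merely the finite covolume) enters decisively, and is the part of the argument I expect to require the most care.
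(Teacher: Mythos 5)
Your overall route is genuinely different from the paper's: the paper proves this lemma by a one-line appeal to the finiteness theorem of Borel and Harish-Chandra \cite[Thm.~6.9]{Borel-Harish-Chandra}, whereas you aim for a self-contained geometric argument (finite fibres of $\phi$, discreteness of the image in the finite-volume quotient, containment in a compact part). The skeleton is viable, but the step you yourself flag as the hard one fails as stated. An element of the stabilizer $(\Gamma_Q)_v$ of a cusp $[v]$ acts along the horospheres centered at $[v]$: if $gv=v$ then $Q(v,gx)=Q(g^{-1}v,x)=Q(v,x)$, so the horoball level $|Q(v,x)|^2/|Q[x]|$ is \emph{invariant} under the unipotent lattice, and translating $x_m$ by it can never push $[x_m]$ ``across a horoball boundary into the thick part.'' More basically, whether the image of $[x_m]$ in $\Gamma_Q\backslash\mathrm H_\mathbb{F}^n(Q)$ lies in the thick part is a property of the $\Gamma_Q$-orbit, so no element of $\Gamma_Q$ can change it; cuspidal reduction moves an arbitrary point of $\mathrm H_\mathbb{F}^n(Q)$ into a Siegel set, it does not move a point of the quotient out of a cusp neighborhood.

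The correct replacement, which does use the arithmetic decisively but differently, is that integral points of $Q$-value $-k$ simply cannot enter a deep horoball at all. If $v\in\mathcal O^{n+1}$ is a null vector representing a cusp and $x\in\mathcal R(Q,-k)$, then $Q(v,x)=v^*Qx\in\mathcal O$, and $Q(v,x)\neq0$ because $Q$ restricted to $x^\bot$ is positive definite while $Q[v]=0$; hence $|Q(v,x)|^2=Q(v,x)\overline{Q(v,x)}\in\mathbb{Z}_{\geq1}$ by property (iii) of the order, so the horoball level satisfies $|Q(v,x)|^2/|Q[x]|\geq 1/k$. Thus $\phi(\mathcal R(Q,-k))$ avoids a definite neighborhood of each of the finitely many cusps (if there are no cusps the quotient is already compact), its image in the quotient is a discrete subset of a compact set, hence finite, and your fibre argument finishes. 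One further small slip: the fibre of $\phi$ is \emph{not} bounded by the number of units of modulus one --- over $\mathbb{Z}[\sqrt{-1}]$, taking $x=5x'$ and $\lambda=(3+4\sqrt{-1})/5$ gives $|\lambda|=1$ and $x\lambda\in\mathcal O^{n+1}$ with $\lambda\notin\mathcal O$ --- but each fibre is still finite, being the set of lattice points in the compact set $\{x\lambda:|\lambda|=1\}$, and mere finiteness of fibres is all the reduction to the image requires.
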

\begin{proof}
  This assertion follows by applying \cite[Thm.~6.9]{Borel-Harish-Chandra}.
\end{proof}

From now on, we fix $k\in\mathbb{N}$ such that $Q$ \emph{represents} $-k$, that is, there exists $x\in\mathcal O^{n+1}$ satisfying $Q[x]=-k$.
Let $F$ be a (finite) set of representatives of the $\Gamma_Q$-orbits of $\mathcal R(Q,-k)$.
Let $\Gamma_{Q,y}$ be the stabilizer of $y$ in $\Gamma_Q$, which is finite.
We conclude this section by relating the number $N_t(Q,k)$ defined in \eqref{eq:N_t(Q,k)}, with the cardinality of subsets of lattice points in $\Gamma_Q$.

\begin{proposition}\label{thm:main2}
  For $t>0$, we have
  \begin{equation}\label{eq:main2}
    N_t(Q,-k)= \sum_{y\in F} |\Gamma_{Q,y}|^{-1} \;\#\left\{g\in\Gamma_Q:d([e_{n+1}],g\cdot[y])\leq s\right\},
  \end{equation}
  where $s=\operatorname{arccosh}(a^{1/2}k^{-1/2}\, t)>0$.
\end{proposition}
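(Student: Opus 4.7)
The plan is to reduce the counting problem to a lattice-point count in $\mathrm H_\mathbb{F}^n(Q)$ in two steps: first use orbit-stabilizer to parametrize $\mathcal R(Q,-k)$ by $\Gamma_Q$, then translate the condition $|x_{n+1}|\le t$ into a hyperbolic distance inequality via the distance formula \eqref{eq:dist}.

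Concretely, I would begin by writing the disjoint-union decomposition $\mathcal R(Q,-k) = \bigsqcup_{y\in F}\Gamma_Q\cdot y$. The orbit through $y$ is in bijection with $\Gamma_Q/\Gamma_{Q,y}$, so summing over orbits gives
$$N_t(Q,-k) = \sum_{y\in F} |\Gamma_{Q,y}|^{-1}\,\#\bigl\{g\in\Gamma_Q:|(gy)_{n+1}|\le t\bigr\}.$$
This reduces the proposition to the identity $|(gy)_{n+1}|\le t \iff d([e_{n+1}],g\cdot[y])\le s$ for each fixed $y\in F$ and $g\in\Gamma_Q$.

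The heart of the proof is this translation, which is a direct application of \eqref{eq:dist}. For $g\in \mathrm U(Q,\mathbb{F})$ we have $Q[gy]=Q[y]=-k$, while $Q[e_{n+1}]=-a$ by the block form \eqref{eq:Q}. A short computation yields $Q(e_{n+1},gy)=e_{n+1}^*Q(gy)=-a\,(gy)_{n+1}$, so \eqref{eq:dist} gives
$$\cosh\bigl(d([e_{n+1}],g\cdot[y])\bigr) = \frac{a\,|(gy)_{n+1}|}{a^{1/2}\,k^{1/2}} = \frac{a^{1/2}}{k^{1/2}}\,|(gy)_{n+1}|.$$
Since $\cosh$ is strictly increasing on $[0,\infty)$, the inequality $|(gy)_{n+1}|\le t$ is equivalent to $d([e_{n+1}],g\cdot[y])\le \operatorname{arccosh}(a^{1/2}k^{-1/2}t)=s$. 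Substituting into the display above yields \eqref{eq:main2}.

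There is no real obstacle beyond bookkeeping: one just needs to verify that $[gy]$ is a genuine point of $\mathrm H_\mathbb{F}^n(Q)$ (which holds since $Q[gy]=-k<0$) and that the formula \eqref{eq:dist} applies without ambiguity on projective classes. The implicit positivity $s>0$ corresponds to $t>\sqrt{k/a}$, which is automatically satisfied whenever $\mathcal R(Q,-k)$ contains a point with $|x_{n+1}|\le t$, since any $x\in\mathcal R(Q,-k)$ satisfies $a|x_{n+1}|^2=A[\hat x]+k\ge k$.
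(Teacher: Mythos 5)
Your proof is correct and takes essentially the same route as the paper: the identical computation $\cosh\bigl(d([e_{n+1}],g\cdot[y])\bigr)=a^{1/2}k^{-1/2}\,|(gy)_{n+1}|$ from \eqref{eq:dist}, combined with the decomposition of $\mathcal R(Q,-k)$ into $\Gamma_Q$-orbits over $F$. The only difference is that you spell out the orbit--stabilizer bookkeeping behind the factor $|\Gamma_{Q,y}|^{-1}$ (and the positivity of $s$), steps the paper compresses into ``counting the elements of these sets.''
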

\begin{proof}
Put $$\mathcal R_t(Q,-k)=\{x\in\mathcal R(Q,-k):|x_{n+1}|\leq t\}.$$
The cardinality of this set is $N_t(Q,-k)$.
Let $x=(x_1,\dots,x_{n+1})^t\in\mathcal R(Q,-k)$. By \eqref{eq:dist},
\begin{equation}\label{eq:cosh(d(e,x))}
\cosh\left(d([e_{n+1}],[x])\right)= {a^{1/2}}{k^{-1/2}}\, |x_{n+1}|.
\end{equation}
Fix $t>0$, thus $\cosh(s)=a^{1/2}k^{-1/2}\,t$. Let $g\in\Gamma_Q$ and $y\in F$ such that $gy=x$, then $g\cdot[y]=[x]$.
Note that \eqref{eq:cosh(d(e,x))} tells that
\begin{equation*}
  |x_{n+1}|\leq t\quad\text{if and only if}\quad d([e_{n+1}],g\cdot[y])\leq s.
\end{equation*}
But the condition on the left ensures that $x\in\mathcal R_t(Q,-k)$.
We conclude that
\begin{equation}\label{eq:R_t(Q,-k)}
\mathcal R_t(Q,-k)=
\bigcup_{y\in F} \{gy:g\in\Gamma_Q\;\text{ and }\;d([e_{n+1}],g\cdot[y])\leq s\}.
\end{equation}
The proposition follows by counting the elements of these sets.
\end{proof}

\section{Lattice point theorem}
In this section we use lattice point theorems to determine the asymptotic distribution, for $t\to+\infty$, of the number of elements in the sets $$\left\{g\in\Gamma_Q:d([e_{n+1}],g\cdot[y])\leq s\right\}.$$

Let
\begin{equation}
G=\mathrm{SU}^0(Q,\mathbb{F}),
\end{equation}
the identity connected component of $\mathrm{SU}(Q,\mathbb{F})$.
We have $G=\mathrm{PSO}(Q)$ for $\mathbb{F}=\mathbb{R}$ (see Remark~\ref{rmk:isometry_groups}).
When $\mathbb{F}=\mathbb{C},\mathbb H$ the group $\mathrm{SU}(Q,\mathbb{F})$ is connected.
Let $\mathfrak g$ denote the Lie algebra $\mathfrak{su}(Q,\mathbb{F})$ of $G$.
We have
$$
\mathfrak g= \big\{X\in\mathrm{M}(n+1,\mathbb{F}): X^*Q+QX=0\; (\text{and $\textrm{Tr}(X)=0$ if $\mathbb{F}=\mathbb{C}$})\big\}
$$

\begin{remark}\label{rmk:cartan-involution}
If $T=\left(\begin{smallmatrix}L&\\ &\sqrt{a}\end{smallmatrix}\right)$, where $L^*L=A$, then $I_{n,1}[T]=T^*I_{n,1}T=Q$.
It follows that the map $g\mapsto Tg T^{-1}$ gives an isomorphism from $\mathrm{U}(Q,\mathbb{F})$ to $\mathrm{U}(n,1;\mathbb{F}):=\mathrm{U}(I_{n,1},\mathbb{F})$ and thus also from $G$ to $\mathrm{SU}^0(n,1;\mathbb{F})$.
The corresponding isomorphism at the Lie algebra level $\mathfrak{su}(Q,\mathbb{F})\to\mathfrak{su}(n,1;\mathbb{F})$ is given by $X\mapsto TXT^{-1}$.
This isomorphism allows us to consider the Cartan involution $\theta$ on $\mathfrak g$ by pulling back the standard Cartan involution $X\mapsto -X^*$ on $\mathfrak{su}(n,1;\mathbb{F})$.
It is easy to check that $\theta(X)= -(T^*T)^{-1}X^*(T^*T)$.
\end{remark}

The group $G$ is a connected semisimple Lie group of real rank one and finite center.
Let $\theta$ be the Cartan involution given in Remark~\ref{rmk:cartan-involution}, with corresponding Cartan decomposition $\mathfrak g=\mathfrak k\oplus\mathfrak p$.
Let $H_0$ be the matrix in $\mathfrak g$ defined as the pull back from $\mathfrak{su}(n,1;\mathbb{F})$ of the matrix
$$
\left(\begin{array}{c|c}\rule{0pt}{20pt}\rule{20pt}{0pt}& \raisebox{8pt}{$e_n$}\\ \hline e_n^*&\end{array}\right).
$$
Let $\mathfrak a=\mathbb{R} H_0$, a maximal abelian subspace of $\mathfrak p$ since $G$ has real rank one.
Let $\mathfrak g=\mathfrak k\oplus \mathfrak a\oplus\mathfrak n$ and $G=NAK$ be the corresponding Iwasawa decomposition of $\mathfrak g$ and $G$, respectively.
Here $K$ is a maximal compact subgroup of $G$, with Lie algebra $\mathfrak k$.
Let $M$ be the centralizer of $A$ in $K$ with Lie algebra $\mathfrak m$.
Set $\zeta=\mathrm{vol}(K/M)$.
Let $2\rho$ denote the sum of the positive roots of $G$, thus $2\rho= (n+1)r-2=$ $n-1$, $2n$, $4n+2$ for $\mathbb{F}=\mathbb{R},\mathbb{C},\mathbb{H}$ respectively (recall that $r=\dim_\mathbb{R}(\mathbb{F})$).

Let $B_K$ denote the Killing form on $\mathfrak g$.
We shall work with the inner product $\langle\rule{3pt}{0ex},\rule{2pt}{0ex}\rangle$ on $\mathfrak g$ defined by
\begin{equation}\label{eq:inner_product_g}
  \langle X,Y\rangle=-\frac{1}{\xi_\mathbb{F}}\,B_K(X,\theta Y),\qquad
  \text{where }\;
  \xi_\mathbb{F}=
\left\{
\begin{smallmatrix}
  2(n-1)&\quad\text{for }\mathbb{F}=\mathbb{R},\\
  4(n+1)&\quad\text{for }\mathbb{F}=\mathbb{C},\\
  8(n+2)&\quad\text{for }\mathbb{F}=\mathbb{H}.
\end{smallmatrix}
\right.
\end{equation}
A simple computation shows that $\langle X,Y\rangle=\frac12\textrm{Tr}(X^*\,Y)$, thus $\langle H_0,H_0\rangle=1$.
We consider on the homogeneous manifold $G/K$, the $G$-invariant Riemannian metric defined by the restriction of $\langle\cdot,\cdot\rangle$ to $\mathfrak p$.

One can check that the action of $G$ on $\mathrm H_\mathbb{F}^n(Q)$ given in \eqref{eq:vartheta} is transitive, the element $[e_{n+1}]$ lies in $\mathrm H_\mathbb{F}^n(Q)$ and its stabilizer subgroup is $K$.
Hence, the map $g\mapsto [ge_{n+1}]$ from $G$ to $\mathrm H_\mathbb{F}^n(Q)$ gives rise to a $G$-equivariant bijection between the symmetric space $G/K$ and $\mathrm H_\mathbb{F}^n(Q)$.
Moreover, it follows by standard arguments that this bijection is already an isometry of Riemannian manifolds.
This gives to $\mathrm H_\mathbb{F}^n(Q)$ the structure of a Riemannian symmetric space.

Let $\Gamma$ be a non-cocompact lattice in $G$.
Set $m_\Gamma=|\Gamma\cap Z(G)|$.
Let $\mathrm{vol}(\Gamma \backslash\mathrm H_\mathbb{F}^n(Q))$ denotes the volume on any fundamental domain in $\mathrm H_\mathbb{F}^n(Q)$ relative to $\Gamma_Q$.
Let $\Delta$ be the Laplace-Beltrami operator on $\Gamma\backslash \mathrm H_\mathbb{F}^n(Q)$.
We identify $-\Delta$ with the Casimir operator $C$ of $G$, with respect to the inner product defined in \eqref{eq:inner_product_g}.
We fix a complete orthonormal set $\{\varphi_j\}$ of real valued eigenfunctions of $C$, with eigenvalues $\lambda_j$ arranged in increasing order and exceptional eigenvalues $0=\lambda_0<\lambda_1\leq\dots\leq\lambda_N<\rho^2$, which we write as $\lambda_j=\rho^2-\nu_j^2$, where $0<\nu_N\leq\nu_{N-1}\leq\dots\leq\nu_1<\rho$.

Now we can state the \emph{hyperbolic lattice point theorem}.
It was proved for the real hyperbolic space by Lax and Phillips \cite{Lax-Phillips}, with an improved error term by Levitan \cite{Levitan}, and generalized by Bruggeman, Miatello and Wallach \cite{Bruggeman-Miatello-Wallach} for any symmetric space of real rank one.

\begin{theorem}\label{thm:LPT_BMW}
In the notation above, for $[x],[y]\in \mathrm H_\mathbb{F}^n(Q)$, we have that
\begin{multline}\label{eq:LPT_BMW}
\#\{g\in\Gamma:d([x],g\cdot [y])\leq s\}=
\frac{2^{1-n} \, m_\Gamma \, \zeta}{2\rho \,\mathrm{vol}(\Gamma\backslash \mathrm H_\mathbb{F}^n(Q))}\, e^{2\rho s}\\
+2^{1-n}m_\Gamma \zeta\;\sum_{j=1}^N\frac{c(\nu_j)}{\nu_j+\rho}\, \varphi_j(x)\varphi_j(y)\, e^{(\rho+\nu_j)s}
+O\left(e^{(2\rho\frac{n}{n+1}+\varepsilon)s}\right)
\end{multline}
as $s\to+\infty$, for $\varepsilon=0$ when $\mathbb{F}=\mathbb{R}$ and for any $\varepsilon>0$ otherwise.
Here $c(\nu)$ is the Harish-Chandra $c$-function.
\end{theorem}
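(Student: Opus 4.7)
The plan is to count lattice points by interpreting the counting function as the value at the diagonal of an automorphic kernel, and then to spectrally decompose that kernel on $\Gamma\backslash \mathrm{H}_\mathbb{F}^n(Q)$. Concretely, for a suitable bi-$K$-invariant test function $\chi_s$ on $G$ that approximates the characteristic function of the ball $\{d(eK,gK)\le s\}$, set
\[
 K_s(x,y)=\sum_{\gamma\in\Gamma}\chi_s\bigl(d(x,\gamma\cdot y)\bigr).
\]
Then $K_s(x,y)$ coincides, up to endpoint corrections, with $\#\{\gamma\in\Gamma:d(x,\gamma\cdot y)\le s\}$. Because $G$ has real rank one, $\chi_s$ has a spherical (Harish-Chandra) transform $\widetilde{\chi}_s(\nu)$ which can be computed explicitly once $\chi_s$ is chosen to be radial.

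Next I would use the spectral decomposition of $L^2(\Gamma\backslash G/K)$. Expanding $K_s$ against the orthonormal basis $\{\varphi_j\}$ of the discrete part and the Eisenstein series for the continuous part gives
\[
 K_s(x,y)=\sum_{j\ge 0}\widetilde{\chi}_s(\nu_j)\,\varphi_j(x)\,\varphi_j(y)+\text{(continuous spectrum)}.
\]
The constant eigenfunction $\varphi_0=\mathrm{vol}(\Gamma\backslash\mathrm H_\mathbb{F}^n(Q))^{-1/2}$ contributes the main term; asymptotic analysis of $\widetilde{\chi}_s$ as $s\to\infty$ (using that the spherical function $\varphi_\nu$ behaves like $c(\nu)e^{(\nu-\rho)s}+c(-\nu)e^{(-\nu-\rho)s}$) yields the factor $e^{2\rho s}/(2\rho)$, while the normalization constants $2^{1-n}m_\Gamma\zeta$ come from writing $G$-integrals on $G/K$ in polar coordinates (the measure on radial functions is $\zeta\sinh^{2\rho}$-type, and the factor $m_\Gamma$ accounts for the quotient by $\Gamma\cap Z(G)$). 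The same asymptotic applied to each exceptional eigenvalue $\lambda_j=\rho^2-\nu_j^2$ with $\nu_j>0$ produces the intermediate terms with factor $c(\nu_j)/(\nu_j+\rho)$ and growth $e^{(\rho+\nu_j)s}$.

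The main obstacle, and the part that forces the use of the refined methods of Lax--Phillips--Levitan and Bruggeman--Miatello--Wallach, is controlling the contribution of the continuous spectrum and, equivalently, quantifying the approximation between $\chi_s$ and the sharp ball. A naive truncation loses too much in the error term. The standard workaround is to sandwich the sharp indicator between smoothings $\chi_s^\pm$ at scale $\delta=e^{-s/(n+1)}$, evaluate the spectral sum for each smoothing, and optimize $\delta$. This requires two ingredients: pointwise/Sobolev bounds on eigenfunctions $\varphi_j$ (handled by elliptic regularity together with the finite-volume geometry near the cusps), and bounds on $|c(\nu)|^{-2}$ against the Plancherel measure so that the continuous part is $O(e^{(2\rho-\delta')s})$ for an explicit $\delta'$. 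Putting these together produces the exponent $2\rho\tfrac{n}{n+1}$ in the remainder; the $\varepsilon$ loss in the $\mathbb{F}=\mathbb{C},\mathbb{H}$ cases reflects the fact that in these cases one only has $L^p$ rather than $L^\infty$ control on the relevant Eisenstein contributions, while in the real case a sharper argument via the wave equation (as in Lax--Phillips) removes the $\varepsilon$.
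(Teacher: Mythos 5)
First, a point of comparison: the paper does not prove Theorem~\ref{thm:LPT_BMW} at all --- it is imported verbatim from the literature, namely from \cite{Lax-Phillips} and \cite{Levitan} for $\mathbb{F}=\mathbb{R}$ and from \cite{Bruggeman-Miatello-Wallach} for general rank one. So your proposal must be judged against those proofs, and its skeleton is indeed the correct common skeleton: a point-pair invariant kernel $K_s(x,y)=\sum_{\gamma}\chi_s(d(x,\gamma\cdot y))$, spectral expansion over $\{\varphi_j\}$ plus Eisenstein contribution, the constant function giving the main term $e^{2\rho s}/(2\rho)$ with the normalization $2^{1-n}m_\Gamma\zeta$ traceable to the radial (polar-coordinate) measure and to $\Gamma\cap Z(G)$ acting trivially, and the exceptional eigenvalues $\lambda_j=\rho^2-\nu_j^2$ producing the terms $c(\nu_j)(\nu_j+\rho)^{-1}\varphi_j(x)\varphi_j(y)e^{(\rho+\nu_j)s}$ via the $c(\nu)e^{(\nu-\rho)s}$ asymptotics of spherical functions. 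All of that is right in outline.

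The genuine gap is in the smoothing-and-optimization step, which as you describe it does not reach the stated error exponent. Balancing the annulus error $\delta e^{2\rho s}$ against the spectral remainder of a $\delta$-smoothed ball --- which, using the local Weyl law and the Plancherel density $|c(\nu)|^{-2}\sim\nu^{d-1}$ on a space of \emph{real} dimension $d$, is of size roughly $e^{\rho s}\delta^{-(d-1)/2}$ --- forces $\delta\approx e^{-2\rho s/(d+1)}$ (not your $\delta=e^{-s/(n+1)}$) and yields the exponent $2\rho\,d/(d+1)$. For $\mathbb{F}=\mathbb{R}$ one has $d=n$ and this matches $2\rho\frac{n}{n+1}$; but for $\mathbb{F}=\mathbb{C},\mathbb{H}$ the real dimension is $d=nr>n$, and since $2\rho\frac{nr}{nr+1}>2\rho\frac{n}{n+1}$, a direct kernel-smoothing argument proves strictly less than the theorem asserts. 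This is precisely why Bruggeman--Miatello--Wallach do not smooth the ball: they obtain the count from estimates on the analytically continued resolvent of the Laplacian (a Huber-type differencing argument on the resolvent kernel), which is where the uniform $\frac{n}{n+1}$ and the $\varepsilon$-loss (a power of $s$ absorbed into $e^{\varepsilon s}$) actually come from; and the $\varepsilon=0$ statement in the real case is not a sharper smoothing but the Lax--Phillips/Levitan wave-equation energy method, which you mention only in passing. Relatedly, your explanation of the $\varepsilon$ as reflecting ``$L^p$ rather than $L^\infty$ control of Eisenstein contributions'' is an unsupported guess, and the two ingredients you invoke as ``standard'' --- uniform bounds on $\varphi_j$ and on the continuous spectrum in a finite-volume, cusped quotient --- are exactly the substantive analytic content of the cited proofs, so asserting them without proof leaves the argument incomplete even in the real case.
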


Note that the summation in \eqref{eq:LPT_BMW} can be restricted to the indices $j$ such that $\rho+\nu_j> 2\rho\,\frac{n}{n+1}$ (in the real case we can replace $>$ by $\geq$).
Put
\begin{eqnarray}\label{eq:tau}
\tau&=&
 \begin{cases}
  \rho+\nu_1& \text{if}\quad \rho+\nu_1\geq 2\rho \frac{n}{n+1}\quad \text{and}\quad\mathbb{F}=\mathbb{R},\\[1mm]
  \rho+\nu_1& \text{if}\quad \rho+\nu_1  >  2\rho \frac{n}{n+1}\quad\text{and}\quad\mathbb{F}=\mathbb{C},\mathbb{H},\\[1mm]
  2\rho\frac{n}{n+1}+\varepsilon&\text{otherwise,}
 \end{cases}
\end{eqnarray}
where $\varepsilon$ is zero if $\mathbb{F}=\mathbb{R}$ or any positive value if $\mathbb{F}=\mathbb{C},\mathbb{H}$.
The last case includes the case when there are no exceptional eigenvalues.
With this notation we can rewrite \eqref{eq:LPT_BMW} as
\begin{equation}\label{eq:LPT-unificado}
\#\{g\in\Gamma:d([x],g\cdot [y])\leq s\} =
\frac{2^{1-n} \, m_\Gamma \, \zeta }{2\rho \,\mathrm{vol}(\Gamma\backslash \mathrm H_\mathbb{F}^n(Q))}\, e^{2\rho s}
+O\left(e^{\tau s}\right).
\end{equation}

We see that in \eqref{eq:LPT-unificado} the error term depends on the first nonzero eigenvalue of the Laplace-Beltrami operator on $\Gamma\backslash \mathrm H_\mathbb{F}^n(Q)$.
The following theorem was proved in \cite[Thm.~A]{EGMKloosterman} (see also \cite{Cogdell-Li-Piatetski-Shapiro-Sarnak}).
The notation $\mathrm{PSO}(Q)$ was introduced in Remark~\ref{rmk:isometry_groups}.

\begin{theorem}\label{thm:lambda_1}
  Let $n\geq3$ and let $Q_0$ be a quadratic form with rational coefficients such that $Q_0$ is of signature $(n,1)$ and isotropic over $\mathbb{Q}$.
  For any congruence subgroup $\Gamma<\mathrm{PSO}(Q_0)$, the first nonzero eigenvalue $\lambda_1$ for the Laplace-Beltrami operator on $\Gamma\backslash \mathrm H_\mathbb{R}^n(Q)$ satisfies $\lambda_1\geq (2n-3)/4$.
\end{theorem}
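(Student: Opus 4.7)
The plan is to deploy a Kuznetsov-type relative trace formula for $\Gamma$, reducing the desired spectral gap to a bound on generalized Kloosterman sums attached to the orthogonal group $\mathrm{PSO}(Q_0)$.

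The isotropy hypothesis on $Q_0$ over $\mathbb{Q}$ furnishes a $\mathbb{Q}$-rational null line, hence a cusp $\mathfrak{a}$ of $\Gamma\backslash\mathrm{H}_\mathbb{R}^n(Q_0)$. Its stabilizer in $\mathrm{PSO}(Q_0)$ is a rational parabolic $P_\mathfrak{a}=M_\mathfrak{a}A_\mathfrak{a}N_\mathfrak{a}$ with abelian unipotent radical of dimension $n-1$. To non-trivial additive characters $\psi,\psi'$ of $\Gamma\cap N_\mathfrak{a}$ I would attach the Poincar\'e series $P_\mathfrak{a}(\psi,s;\cdot)$ on $\Gamma\backslash G$ and compute the inner product $\langle P_\mathfrak{a}(\psi,s_1),P_\mathfrak{a}(\psi',s_2)\rangle$ in two ways.

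Spectrally, this inner product unfolds against the Maass cusp forms and Eisenstein series, producing an expression of the form $\sum_j \overline{a_j(\psi)}\,a_j(\psi')\,h(\nu_j)$ plus a continuous-spectrum term, where $a_j(\psi)$ is the $\psi$-Fourier coefficient of $\varphi_j$ at $\mathfrak{a}$ and the test kernel $h$ is determined by $(s_1,s_2)$. Geometrically, one uses the Bruhat decomposition of $\Gamma$ with respect to $P_\mathfrak{a}$ to obtain a main (diagonal) term together with a series $\sum_c S_\mathfrak{a}(\psi,\psi';c)\,\mathcal{J}(c)$ of orthogonal Kloosterman sums $S_\mathfrak{a}(\psi,\psi';c)$ weighted by Bessel-type transforms. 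Equating both sides and choosing $h$ positive and concentrated on a putative exceptional parameter $\nu_1$, a single Maass form dominates the spectral side, and its eigenvalue is controlled by the Kloosterman side.

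The crux is then a bound of the form $|S_\mathfrak{a}(\psi,\psi';c)|\ll c^{\rho-1/2+\varepsilon}$ on these Kloosterman sums; this is the main obstacle and the arithmetic heart of the theorem. In \cite{EGMKloosterman} it is settled by realizing the relevant sums as Kloosterman sums for Clifford algebras and applying Weil-type estimates, while \cite{Cogdell-Li-Piatetski-Shapiro-Sarnak} treat the orthogonal Kloosterman sums directly via a unipotent sum-formula tailored to $\mathrm{SO}(n,1)$. With this input, the spectral side is forced to be small enough that $\nu_1\leq (n-2)/2$, equivalently $\lambda_1=\rho^2-\nu_1^2 \geq \rho^2-(n-2)^2/4=(2n-3)/4$, as claimed. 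Reproducing the Kloosterman bound itself lies beyond a short sketch, and is the sole place where the congruence hypothesis on $\Gamma$ and the rationality of $Q_0$ are genuinely used.
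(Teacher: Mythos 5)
The paper gives no proof of this theorem: it is imported verbatim from \cite[Thm.~A]{EGMKloosterman} (see also \cite{Cogdell-Li-Piatetski-Shapiro-Sarnak}), so there is no internal argument to compare against. Your sketch faithfully outlines the Poincar\'e-series/Kuznetsov mechanism those references actually use --- the cusp supplied by $\mathbb{Q}$-isotropy, the Bruhat decomposition producing generalized Kloosterman sums, Weil-type bounds on those sums, and the translation $\nu_1\leq (n-2)/2 \Leftrightarrow \lambda_1\geq (2n-3)/4$ with $\rho=(n-1)/2$ --- while deferring the decisive Kloosterman estimate to the very same sources, so it is consistent with, and a reasonable expansion of, the paper's citation-only treatment.
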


Bounds of this kind are not known in the complex and quaternionic case (see \cite[Cor.~1.4]{Li} for a related result  in the complex case).

\begin{remark}\label{rmk:lambda_1}
Under the assumptions of Theorem~\ref{thm:lambda_1}, the value of $\nu_1$ satisfies
\begin{align*}
  \nu_1&\leq \sqrt{\left(\frac{n-1}2\right)^2 -\frac{2n-3}4}=\frac{n-2}2.
\end{align*}
Moreover, in the worst possible case $\nu_1=\frac{n-2}2$, one has $\rho+\nu_1\geq 2\rho \frac{n}{n+1}$ for all $n\geq3$, which implies that \eqref{eq:LPT-unificado} holds for $\tau=\frac{n-1}2+\frac{n-2}2=n-\frac32$ since $\mathbb{F}=\mathbb{R}$.
\end{remark}

Before applying Theorem~\ref{thm:LPT_BMW} to our problem, we give the value of $\zeta=\mathrm{vol}(K/M)$.
\begin{lemma}\label{lem:zeta}
Under the notation above, one has $\zeta=\mathrm{vol}(S^{nr-1})$.
\end{lemma}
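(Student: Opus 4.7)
The plan is to identify $K/M$ with the unit sphere $S(\mathfrak{p})$ of $\mathfrak{p}$ as Riemannian manifolds, where $\mathfrak{p}$ carries the restriction of $\langle\cdot,\cdot\rangle$, and then note that $\dim_\mathbb{R}\mathfrak{p}=\dim_\mathbb{R}\mathrm H_\mathbb{F}^n(Q)=nr$, so $S(\mathfrak{p})\cong S^{nr-1}$ with the round metric.

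First I would establish the diffeomorphism. The adjoint action of $K$ preserves $\langle\cdot,\cdot\rangle|_\mathfrak{p}$, and since $G$ has real rank one, $K$ acts transitively on $S(\mathfrak{p})$. Because $\langle H_0,H_0\rangle=1$, the element $H_0\in\mathfrak{a}\subset\mathfrak{p}$ lies on $S(\mathfrak{p})$, and its stabilizer in $K$ equals $M$: indeed the stabilizer of the line $\mathfrak{a}=\mathbb{R} H_0$ is the normalizer $N_K(\mathfrak{a})$, and the Weyl group $N_K(\mathfrak{a})/M$ of a rank-one symmetric space has order two with nontrivial element sending $H_0$ to $-H_0$, so the pointwise stabilizer is exactly $M$. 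Thus the orbit map yields a $K$-equivariant diffeomorphism $\Phi\colon K/M\to S^{nr-1}$, $kM\mapsto\mathrm{Ad}(k)H_0$.

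Next I would show that $\Phi$ is an isometry. By $K$-equivariance this reduces to checking the differential at $eM$, which sends $X+\mathfrak{m}\in\mathfrak{k}/\mathfrak{m}$ to $[X,H_0]=-\mathrm{ad}(H_0)X\in H_0^{\perp}\subset\mathfrak{p}$. Using the restricted root space decomposition of $\mathfrak{g}$ with respect to $\mathfrak{a}$, one has $\mathfrak{k}/\mathfrak{m}\cong\bigoplus_\alpha(1+\theta)\mathfrak{g}_\alpha$ and $\mathfrak{p}/\mathfrak{a}\cong\bigoplus_\alpha(1-\theta)\mathfrak{g}_\alpha$, on which $\mathrm{ad}(H_0)$ intertwines these by multiplication by $\alpha(H_0)$. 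The normalization constant $\xi_\mathbb{F}$ in \eqref{eq:inner_product_g} is chosen precisely so that $\langle H_0,H_0\rangle=1$ and the simple restricted root takes the value $1$ on $H_0$; a short computation using the multiplicities $(m_\alpha,m_{2\alpha})$ of the rank-one root systems (of type $\mathsf{A}_1$ for $\mathbb{F}=\mathbb{R}$ and $\mathsf{BC}_1$ for $\mathbb{F}=\mathbb{C},\mathbb{H}$) then verifies that $-\mathrm{ad}(H_0)$ restricts to an isometry between $\mathfrak{k}/\mathfrak{m}$ and $H_0^{\perp}$ with their induced inner products. This yields the result, since the induced metric on $H_0^{\perp}\subset\mathfrak{p}$ is exactly the round metric on $S(\mathfrak{p})$ at $H_0$.

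The main obstacle is this last normalization check: one must verify that the case-dependent constants $\xi_\mathbb{F}=2(n-1),4(n+1),8(n+2)$ match the trace of $\mathrm{ad}(H_0)^2$ on the relevant subspaces, so that $\langle H_0,H_0\rangle=1$ forces $\mathrm{ad}(H_0)$ to act as an orthogonal isomorphism rather than by some uniform scalar. Once this is confirmed, the lemma follows from $\mathrm{vol}(K/M)=\mathrm{vol}(S(\mathfrak{p}))=\mathrm{vol}(S^{nr-1})$.
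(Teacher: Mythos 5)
Your overall strategy is the same as the paper's: identify $K/M$ with the unit sphere $S(\mathfrak p)$ of $(\mathfrak p,\langle\cdot,\cdot\rangle)$ via the orbit map $kM\mapsto\mathrm{Ad}(k)H_0$ and use that the unit sphere of an $nr$-dimensional inner product space is the round $S^{nr-1}$. Your transitivity and stabilizer arguments are fine (in fact the Weyl-group detour is unnecessary: the stabilizer of the \emph{point} $H_0$ is $\{k\in K:\mathrm{Ad}(k)H_0=H_0\}=Z_K(\mathfrak a)=M$ directly, since $\mathfrak a=\mathbb{R}H_0$). The genuine gap is in the step you yourself flag as the main obstacle but do not carry out: the claim that $-\mathrm{ad}(H_0)$ is an isometry from $\mathfrak k\cap\mathfrak m^\bot$ onto $H_0^\bot\subset\mathfrak p$. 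Doing the computation: for $Y\in\mathfrak g_\lambda$ with $\lambda\in\{\alpha,2\alpha\}$ one has $[H_0,(1+\theta)Y]=\lambda(H_0)\,(1-\theta)Y$, while $\|(1+\theta)Y\|=\|(1-\theta)Y\|$ because $B(Y,Y)=B(\theta Y,\theta Y)=0$ for root vectors; hence the differential scales the block $(1+\theta)\mathfrak g_\lambda$ by exactly $\lambda(H_0)$. With the paper's normalization $\langle H_0,H_0\rangle=1$ one finds $\alpha(H_0)=1$ and $2\alpha(H_0)=2$, so the differential is an isometry precisely when $\mathfrak g_{2\alpha}=0$, i.e.\ when $\mathbb{F}=\mathbb{R}$. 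For $\mathbb{F}=\mathbb{C},\mathbb{H}$ and $n\geq2$ both multiplicities $m_\alpha=r(n-1)$ and $m_{2\alpha}=r-1$ are positive, the map is not even conformal, and your proposed mechanism via $\xi_\mathbb{F}$ cannot repair it: rescaling $\xi_\mathbb{F}$ rescales the inner product on all of $\mathfrak g$ at once (equivalently, rescales which multiple of $H_0$ has unit norm), multiplying both gains $\alpha(H_0)$ and $2\alpha(H_0)$ by a common factor and never equalizing their fixed ratio $1{:}2$.

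Carried out honestly, your computation yields $\mathrm{vol}\bigl(S(\mathfrak p)\bigr)=2^{m_{2\alpha}}\,\mathrm{vol}(K/M)$ rather than equality, so the argument as written proves the lemma only in the real case. For comparison, the paper's own proof asserts that the orbit map ``is already an isometry'' for the metric on $K/M$ induced from $\mathfrak k\cap\mathfrak m^\bot$ and then spends its effort on the easier half, an explicit matrix computation showing $(\mathfrak p,\langle\cdot,\cdot\rangle)$ is standard Euclidean $\mathbb{R}^{nr}$; it never performs the root-space check, so the two arguments diverge exactly at the point where yours breaks. To close the gap in the non-real cases you would have to either verify that the normalization of $\mathrm{vol}(K/M)$ entering the Bruggeman--Miatello--Wallach constant $\zeta$ is the one transported from $S(\mathfrak p)$ by the orbit map (making the identification a matter of convention rather than a metric statement), or else track the factor $2^{m_{2\alpha}}$ explicitly through the asymptotics; as it stands, your deferred ``short computation'' would refute, not verify, the asserted isometry for $\mathbb{F}=\mathbb{C},\mathbb{H}$.
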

\begin{proof}
It is sufficient to prove the lemma for $Q=I_{n,1}$ since the isomorphism between $G$ and $\mathrm{SU}^0(n,1;\mathbb{F})$ given in Remark~\ref{rmk:cartan-involution} preserves the Killing form and the inner product \eqref{eq:inner_product_g}.

Set $S=\{H\in\mathfrak p:\langle H,H\rangle =1\}$ with the Riemannian metric given by the real inner product $\langle\cdot,\cdot\rangle$ restricted to $\{X\in\mathfrak p: \langle X,H\rangle=0\}\cong T_HS$.
The adjoint representation restricted to $K$ leaves the set $S$ invariant, this action is transitive and the stabilizer subgroup of $H_0\in S$ is $M$.
Then, we have a $K$-equivariant bijection from the manifold $K/M$ to $S$, that is already an isometry, considering the Riemannian metric on $K/M$ given by \eqref{eq:inner_product_g} restricted to $\mathfrak k\cap\mathfrak m^\bot$.

It remains to prove that the Riemannian manifold $S$ is isometric to the $(nr-1)$-dimensional sphere in $\mathbb{R}^{nr}$.
It can be checked that
\begin{equation}\label{eq:p_su(n,1)}
\mathfrak p=\left\{X_v=\left(\begin{array}{c|c}\rule{0pt}{12pt}\rule{12pt}{0pt}& \raisebox{3pt}{$v$}\\ \hline v^*&\end{array}\right) : v\in\mathbb{F}^n \right\}.
\end{equation}
Then $\langle X_v,X_w\rangle = \frac12\mathrm{Tr}(X_v^*\, X_w)= \mathrm{Re} (\sum_{l=1}^n \bar v_l w_l)$.
Identifying $\mathfrak p$ with $\mathbb{F}^n\cong\mathbb{R}^{rn}$ in the obvious way, the inner product $\langle\cdot,\cdot\rangle$ on $\mathfrak p$ coincides with the canonical inner product on the (real) vector space $\mathbb{F}^{n}$ of dimension $nr$.
Hence $S$ is the $(nr-1)$-dimensional sphere of radius one in the (standard) Euclidean space $\mathbb{R}^{nr}$.
\end{proof}

For a more detailed description of the realizations of the symmetric spaces $G/K$ and $K/M$, we refer to
\cite[\S10, Ch.~XI]{Kobayashi-Nomizu}.

Let
\begin{equation}\label{eq:Gamma_Q^0}
\Gamma_Q^0\;=\;G\cap\Gamma_Q \;=\; \mathrm{SU}^0(Q,\mathbb{F})\cap\mathrm{M}(n+1,\mathcal O).
\end{equation}
This is a finite index subgroup of $\Gamma_Q$ (see~\eqref{eq:Gamma_Q}).
Let us denote by $w$ the number of units in $\mathcal O$ when $\mathbb{F}=\mathbb{R}$ or $\mathbb{C}$.
It is clear that
$$
\kappa:=[\Gamma_Q:\Gamma_Q^0]=
\begin{cases}
  4&\text{if $\mathbb{F}=\mathbb{R}$,}\\
  w&\text{if $\mathbb{F}=\mathbb{C}$,}\\
  1&\text{if $\mathbb{F}=\mathbb{H}$.}
\end{cases}
$$
Let $\{g_1,\dots,g_\kappa\}$ be a set of representatives of the $\Gamma_Q^0$-coclases of $\Gamma_Q$.
For example
$\left\{\left(\begin{smallmatrix}I_{n-1}&&\\& \pm1&\\&& \pm1\end{smallmatrix}\right)\right\}$ if $\mathbb{F}=\mathbb{R}$,
$\{\left(\begin{smallmatrix}I_{n}&\\ & \alpha\end{smallmatrix}\right):\alpha\in\mathcal O^\times\}$ if $\mathbb{F}=\mathbb{C}$ and
$\{I_{n+1}\}$ if $\mathbb{F}=\mathbb{H}$.

Now we can apply the lattice point theorem to our problem. For each $1\leq j\leq\kappa$, Theorem~\ref{thm:LPT_BMW} implies that
\begin{equation}\label{eq:g-in-GammaQ0}
\#\{g\in\Gamma_Q^0:d([e_{n+1}],g\cdot(g_j\cdot[y]))\leq s\}=
\frac{2^{1-n} \, m_{\Gamma_Q^0} \, \zeta}{2\rho\,\mathrm{vol}(\Gamma_Q^0\backslash \mathrm H_\mathbb{F}^n(Q))}\, e^{2\rho s} + O(e^{s\tau}).
\end{equation}
A trivial verification shows that $\mathrm{vol}(\Gamma_Q^0\backslash\mathrm H_\mathbb{F}^n(Q))= \xi_\mathbb{F}\, \mathrm{vol}(\Gamma_Q\backslash\mathrm H_\mathbb{F}^n(Q))$, with $\xi_\mathbb{R}=2$, $\xi_\mathbb{C}= m_{\Gamma_Q^0}$ and $\xi_{\mathbb H}=1$.
Furthermore, Lemma~\ref{lem:zeta} gives $\zeta=\mathrm{vol}(S^{nr-1})$.
These considerations imply, by adding formula \eqref{eq:g-in-GammaQ0} over $j$, that
\begin{equation}
\#\big\{g\in\Gamma_Q:d([e_{n+1}],g\cdot[y])\leq s\big\}\;=\;
\widetilde w\;
\displaystyle \frac{2^{1-n}\,\mathrm{vol}(S^{nr-1})}{2\rho\,\mathrm{vol}(\Gamma_Q\backslash \mathrm H_\mathbb{F}^n(Q))}\;e^{2\rho s}
    + O(e^{s\tau})
\end{equation}
where $\widetilde w=w$ if $\mathbb{F}=\mathbb{R},\mathbb{C}$ and $\widetilde w=2$ if $\mathbb{F}=\mathbb H$.
Applying this formula to \eqref{eq:main2}, we obtain that
\begin{equation}\label{eq:NafterLPT}
  N_t(Q,-k)=\widetilde w\;\frac{2^{1-n}\, \mathrm{vol}(S^{nr-1})}{2\rho\,\mathrm{vol}(\Gamma_Q\backslash \mathrm H_\mathbb{F}^n(Q))}\,\Big(\sum_{y\in F}|\Gamma_{Q,y}|^{-1}\Big)\, e^{2\rho s} + O(e^{s\tau}).
\end{equation}

Recall that $\cosh(s)=a^{1/2}\,k^{-1/2}\, t$.
Notice that we can replace the error term in \eqref{eq:NafterLPT} by $O(t^\tau)$ since $e^s\sim2\cosh(s)=2\,a^{1/2}\,k^{-1/2}\, t$ as $s\to+\infty$, and furthermore $e^{2\rho s}=2^{2\rho} a^{\rho} k^{-\rho}\, t^{2\rho} + O(t^\tau)$.

Collecting all the information in this section, we have obtained the following formula.
\begin{proposition}\label{prop:main3}
  The number $N_t(Q,-k)$ satisfies the asymptotic estimate
  \begin{equation}\label{eq:main3}
    N_t(Q,-k)= \widetilde w\;\frac{2^{2\rho-(n-1)}a^\rho\,\mathrm{vol}(S^{nr-1})}{2\rho\,k^\rho\,\mathrm{vol}(\Gamma_Q\backslash \mathrm H_\mathbb{F}^n(Q))}\,\Big(\sum_{y\in F}|\Gamma_{Q,y}|^{-1}\Big)\; t^{2\rho} + O(t^{\tau}),
  \end{equation}
as $t\to+\infty$, where $\tau$ is as in \eqref{eq:tau} and $\widetilde w=w$ if $\mathbb{F}=\mathbb{R},\mathbb{C}$ and $\widetilde w=2$ if $\mathbb{F}=\mathbb H$.
Moreover, when $\mathbb{F}=\mathbb{R}$ and $n>2$, \eqref{eq:main3} holds with $\tau=n-3/2$.
\end{proposition}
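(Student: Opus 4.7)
The plan is to combine Proposition~\ref{thm:main2} with the hyperbolic lattice point theorem (Theorem~\ref{thm:LPT_BMW}), and then repackage the constants. By Proposition~\ref{thm:main2}, the quantity $N_t(Q,-k)$ is already expressed as $\sum_{y\in F}|\Gamma_{Q,y}|^{-1}\cdot\#\{g\in\Gamma_Q:d([e_{n+1}],g\cdot[y])\leq s\}$ with $s=\operatorname{arccosh}(a^{1/2}k^{-1/2}t)$, so the remaining task is to produce an asymptotic for each inner counting function and then change variable from $s$ to $t$.

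Since Theorem~\ref{thm:LPT_BMW} applies to lattices sitting inside the identity component $G=\mathrm{SU}^0(Q,\mathbb{F})$, I would first reduce to the finite-index subgroup $\Gamma_Q^0=\Gamma_Q\cap G$ defined in \eqref{eq:Gamma_Q^0}. Writing $\Gamma_Q=\bigsqcup_{j=1}^{\kappa} g_j\Gamma_Q^0$ and noting that $d([e_{n+1}],g_j h\cdot[y])=d([e_{n+1}],h\cdot(g_j\cdot[y]))$ (because $g_j$ acts by isometries but may not fix $[e_{n+1}]$; applying the theorem to $g_j\cdot[y]$ absorbs this), I would apply Theorem~\ref{thm:LPT_BMW} to $\Gamma_Q^0$ with the points $[e_{n+1}]$ and $g_j\cdot[y]$, then sum over $j$. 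Lemma~\ref{lem:zeta} identifies $\zeta=\mathrm{vol}(S^{nr-1})$, and the standard volume comparison $\mathrm{vol}(\Gamma_Q^0\backslash\mathrm H_\mathbb{F}^n(Q))=\xi_\mathbb{F}\,\mathrm{vol}(\Gamma_Q\backslash\mathrm H_\mathbb{F}^n(Q))$ converts the leading constant to one involving only $\Gamma_Q$. Tracking $m_{\Gamma_Q^0}$, the index $\kappa$, and the constants $\xi_\mathbb{F}$ in each of the three cases $\mathbb{F}=\mathbb{R},\mathbb{C},\mathbb{H}$ produces the factor $\widetilde w$ with $\widetilde w=w$ for $\mathbb{F}=\mathbb{R},\mathbb{C}$ and $\widetilde w=2$ for $\mathbb{F}=\mathbb{H}$.

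The final step is to pass from $e^{2\rho s}$ to $t^{2\rho}$. From $\cosh(s)=a^{1/2}k^{-1/2}t$ one has $e^s=2a^{1/2}k^{-1/2}t+O(t^{-1})$, so $e^{2\rho s}=2^{2\rho}a^{\rho}k^{-\rho}\,t^{2\rho}+O(t^{2\rho-2})$, and since $\tau\geq 2\rho\tfrac{n}{n+1}>2\rho-2$, the lower order contribution is absorbed in $O(t^{\tau})$. The same estimate shows $O(e^{\tau s})=O(t^{\tau})$. Assembling the constants yields the factor $2^{2\rho-(n-1)}a^{\rho}\mathrm{vol}(S^{nr-1})/(2\rho\,k^{\rho}\mathrm{vol}(\Gamma_Q\backslash\mathrm H_\mathbb{F}^n(Q)))$. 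For the sharper error term when $\mathbb{F}=\mathbb{R}$ and $n\geq 3$, I would invoke Theorem~\ref{thm:lambda_1} and the computation in Remark~\ref{rmk:lambda_1}, which gives $\nu_1\leq(n-2)/2$ and $\rho+\nu_1\geq 2\rho\,\tfrac{n}{n+1}$, so that in the worst case $\tau=\rho+\nu_1=n-3/2$.

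The main obstacle is bookkeeping: correctly tracking the powers of $2$, the factor $m_{\Gamma_Q^0}$, the index $\kappa$, and the constants $\xi_\mathbb{F}$ as they redistribute between the lattice point theorem's normalization and the final formula. There is no deeper difficulty once Theorem~\ref{thm:LPT_BMW}, Lemma~\ref{lem:zeta}, and the volume comparison are in hand; the argument is essentially a careful assembly.
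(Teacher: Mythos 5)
Your route is exactly the paper's: Proposition~\ref{thm:main2}, passage to $\Gamma_Q^0$, Theorem~\ref{thm:LPT_BMW} with $\zeta=\mathrm{vol}(S^{nr-1})$ from Lemma~\ref{lem:zeta}, the volume comparison, summation over the $\kappa$ cosets to produce $\widetilde w$, the change of variable $e^{2\rho s}\to 2^{2\rho}a^{\rho}k^{-\rho}t^{2\rho}$ (your power count $2^{1-n}\cdot 2^{2\rho}=2^{2\rho-(n-1)}$ is right), and Remark~\ref{rmk:lambda_1} for the real case. However, one step is wrong as written: the identity $d([e_{n+1}],g_jh\cdot[y])=d([e_{n+1}],h\cdot(g_j\cdot[y]))$ is false pointwise, since $g_jh\cdot[y]=g_j\cdot(h\cdot[y])$; your equation silently exchanges $g_jh$ for $hg_j$, and the parenthetical reason you give does not produce it. The repair is easy and is what the paper implicitly does in \eqref{eq:g-in-GammaQ0}: decompose into \emph{right} cosets $\Gamma_Q=\bigsqcup_j\Gamma_Q^0g_j$, so that $g=hg_j$ gives $g\cdot[y]=h\cdot(g_j\cdot[y])$ on the nose, and apply the lattice point theorem to the pair $([e_{n+1}],\,g_j\cdot[y])$; alternatively, note that $\Gamma_Q^0=\Gamma_Q\cap G$ is normal in $\Gamma_Q$ ($G$ being the identity component for $\mathbb{F}=\mathbb{R}$ and the kernel of $\det$ for $\mathbb{F}=\mathbb{C}$), or that the paper's explicit representatives actually fix $[e_{n+1}]$ projectively, contrary to your parenthetical. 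Since the main term of \eqref{eq:LPT_BMW} does not depend on the two points, any of these fixes yields the same leading coefficient.

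Second, your absorption step ``$\tau\geq 2\rho\tfrac{n}{n+1}>2\rho-2$'' is equivalent to $\rho<n+1$, i.e.\ $(n+1)(\tfrac{r}{2}-1)<1$, which holds for $\mathbb{F}=\mathbb{R},\mathbb{C}$ but fails for $\mathbb{F}=\mathbb{H}$: there $\rho=2n+1$, so $2\rho-2=4n>2\rho\tfrac{n}{n+1}$. Since the proposition covers $\mathbb{F}=\mathbb{H}$ (the case $\widetilde w=2$), your argument does not justify the error term $O(t^{\tau})$ in the quaternionic case unless $\rho+\nu_1\geq 2\rho-2$; otherwise the substitution $e^{2\rho s}=2^{2\rho}a^{\rho}k^{-\rho}t^{2\rho}+O(t^{2\rho-2})$ contributes an error of genuine order $t^{2\rho-2}$. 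To be fair, the paper's own proof makes the same replacement without comment, and its explicit verification that $\tau-(2\rho-2)>0$ (Remark~\ref{rmk:norms}) is carried out only for $\mathbb{R}$ and $\mathbb{C}$, so this is a gap you share with the source rather than one you introduced; but in your write-up it appears as an explicitly asserted inequality that is false for $\mathbb{H}$, so you should either restrict the claim to $r\leq 2$ or weaken the quaternionic error term to $O(t^{\max(\tau,\,2\rho-2)})$.
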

The last assertion follows from Remark~\ref{rmk:lambda_1}.

\section{The mass of the representation}
The object of this section is to obtain a formula for the term $\sum_{y\in F}|\Gamma_{Q,y}|^{-1}$ by using Siegel's theory on quadratic forms and its generalization to complex hermitian forms given by Raghavan.
Our main references are \cite{SiegelTata}, \cite{Ramanathan} and \cite{Raghavan}.
From now on we make the assumption $\mathbb{F}=\mathbb{R}$ or $\mathbb{C}$.
We denote by $d_{\mathcal O}$ the discriminant of the quotient field of $\mathcal O$.
The following Remark introduces a canonical volume element in algebraic varieties that will be useful (see \cite[\S5 Ch IV]{SiegelTata} for more details).

\begin{remark}\label{rmk:volume_element}
Let $x_1,\dots,x_m$ denote the coordinates of $\mathbb{R}^m$. Let $y_j=f_j(x_1,\dots,x_m)$ ($1\leq j\leq n$) be $n$ smooth functions with $n\leq m$.
Let $a_1,\dots,a_n\in\mathbb{R}$ such that the surface $\Omega=\{x\in\mathbb{R}^m: y_j(x)=a_j,\;\text{for } 1\leq j\leq n\}$ is non-singular, that is, the matrix with entries $\frac{\partial f_i}{\partial x_j}$ has maximum rank $n$ at every point of $\Omega$.
Choose $m-n$ differentiable functions $y_{n+1},\dots,y_m$ of $\mathbb{R}^m$ so that the Jacobian $J=\det(\frac{\partial y_i}{\partial x_j})$ is nonzero at every point of $\Omega$.
Then
\begin{equation}
  d\omega = |J|^{-1}\, d y_{n+1}\dots dy_m
\end{equation}
gives a volume element on $\Omega$ independently of the choice of $y_{n+1},\dots,y_m$. Siegel denoted this volume element by $\frac{\{dx\}}{\{dy\}}$.
\end{remark}

We pick $v\in\mathbb{F}^n$ and $R\in\mathrm M(n,\mathbb{F})$ such that the matrix
$$
W=\begin{pmatrix}
-k&v\\v^t&R
\end{pmatrix}
$$
is $\mathbb{F}$-hermitian of signature $(n,1)$.
For $y\in\mathbb{F}^{n+1}$ such that $Q[y]=-k$, let $\mathrm{U}(Q,\mathbb{F})_y$ denote the set of elements $U\in\mathrm{U}(Q,\mathbb{F})$ such that $Uy=y$.
Note that if $y\in\mathcal O^{n+1}$, the stabilizer of $y$ in $\Gamma_Q$ is $\Gamma_{Q,y}=\mathrm{U}(Q,\mathbb{F})_y\cap \mathrm{GL}(n+1,\mathcal O)$.
Consider the varieties
\begin{eqnarray*}
\Omega(Q,W)&=&\{X\in\mathrm{M}(n+1,\mathbb{F}) : Q[X]=W\},\\
\Omega(Q,W;y)&=&\{Y\in\mathrm{M}(n+1,n;\mathbb{F}) : Q[(y\,|\,Y)]=W\}.
\end{eqnarray*}
The groups $\mathrm{U}(Q,\mathbb{F})$ and $\mathrm{U}(Q,\mathbb{F})_y$ act by left multiplication on $\Omega(Q,W)$ and on $\Omega(Q,W;y)$ respectively. On these varieties we fix the volume elements
$$
 d\omega =
\left|\frac{\det(W)}{\det(Q)}\right|^{\frac{2-r}{2}}\,
\frac{\{ d  X\}}{\{ d  W\}}
\qquad\text{and}\qquad
 d\omega^* =
\left|\frac{\det(W)}{\det(Q)}\right|^{\frac{2-r}{2}}\,
\frac{\{ d  X\}}{\{ d  v\}\{ d  R\}},
$$
respectively, where $\frac{\{ d  X\}}{\{ d  W\}}$ and $\frac{\{ d  X\}}{\{ d  v\}\{ d  R\}}$ are the volume elements given in Remark~\ref{rmk:volume_element}.
In the complex case, we write $X=X^{(1)}+iX^{(2)}$ and $W=W^{(1)} + iW^{(2)}$ with $X^{(1)}$, $X^{(2)}$, $W^{(1)}$ and $W^{(2)}$ real matrices. Thus, the volume element $\frac{\{ d  X\}}{\{ d  W\}}$ is defined by considering the algebraic equations $\mathrm{Re}(Q[X^{(1)}+iX^{(2)}])=W^{(1)}$ and $\mathrm{Im}(Q[X^{(1)}+iX^{(2)}])=W^{(2)}$.
The factor $\left|{\det(W)}/{\det(Q)}\right|^{\frac{2-r}{2}}$ is included so that $d\omega$ does not depend on $W$.
Similar consideration apply for $\frac{\{ d  X\}}{\{ d  v\}\{ d  R\}}$ and $d\omega^*$.

Siegel (and \cite{Ramanathan} for the hermitian case) defines \emph{the measure of the representation of $-k\in\mathbb{Z}$ by $Q$} as
\begin{equation}\label{eq:measurerep}
\mu(Q,-k)=\sum_{y\in F} \mu(y,Q)/ \mu(Q).
\end{equation}
Here $\mu(Q)$ denotes the \emph{measure of the unit group $\Gamma_Q$} given by $(r^2/|d_{\mathcal O}|)^{(n+1)(n+2)/4}$ times the volume of any fundamental domain for the action of $\Gamma_Q$ on $\Omega(Q,W)$, and similarly,
$\mu(y,Q)$ is the \emph{measure of the representation $y$}, given by $(r^2/|d_{\mathcal O}|)^{n(n+1)/4}$ times the volume of any fundamental domain for the action of $\Gamma_{Q,y}$ on $\Omega(Q,W;y)$.

We will recover from the right hand side of \eqref{eq:measurerep} the term $\sum_{y\in F}|\Gamma_{Q,y}|^{-1}$ and then, by applying Siegel's main theorem, we will obtain an explicit formula for this term.
By \cite[Thm.~7, Ch~IV]{SiegelTata} and \cite[(93)]{Raghavan} (or \cite[(70)]{Ramanathan}) we have that
\begin{equation}\label{eq:volS}
w\,\mu(Q)
=\big(r^2/{|d_{\mathcal O}|}\big)^{\frac{(n+1)(n+2)}4}\,
\frac{\mathrm{vol}(\Gamma_Q\backslash\mathrm H_\mathbb{F}^n(Q))}{r^{n+1}\, |\det Q|^{\frac{r}{2}n+1} }\,
\frac{\pi^{\frac{r}{2}}}{\Gamma(\frac{r}{2})}
\prod_{j=1}^n\frac{\pi^{\frac{r}{2}j}}{\Gamma(\frac{r}{2}j )},
\end{equation}
where $w=\#\mathcal O^\times$.

Let $\widetilde{\mathcal F}(y)$ be a fundamental domain of the action of $\Gamma_{Q,y}$ on $\Omega(Q,W;y)$.
By definition
$
\mu(y,Q)=
|\det W|^{(2-r)/2}\,|\det Q|^{-(2-r)/2}
(r^2/{|d_{\mathcal O}|})^{n(n+1)/4} \int_{\widetilde{\mathcal F}(y)}  d \omega^*,
$
but the measure $ d \omega^*$ is invariant by $\Gamma_{Q,y}$, then
$$
\mu(y,Q)=
 \frac{1}{|\Gamma_{Q,y}|}\,
 \left|\frac{\det W}{\det Q}\right|^{(2-r)/2}
 \big({r^2}/{{|d_{\mathcal O}|}}\big)^{\frac{n(n+1)}{4}}
 \int_{\Omega(Q,W;y)}  d \omega^*.
$$
Using \cite[Thm.~6, Ch~IV]{SiegelTata} and \cite[Lemma~9]{Ramanathan}, we have that
\begin{equation}\label{eq:volS*}
\mu(y,Q)=
|\Gamma_{Q,y}|^{-1}\,
\big({r^2}/{{|d_{\mathcal O}|}}\big)^{\frac{n(n+1)}{4}}\,
\frac{k^{1-\frac{r}{2}(n+1)}}{|\det Q|^{\frac{r}{2}(n-1)+1}}\;
\prod_{j=1}^n\frac{\pi^{\frac{r}{2} j }}{\Gamma(\frac{r}{2} j )}.
\end{equation}

Finally, \eqref{eq:volS} and \eqref{eq:volS*} imply that
\begin{equation}\label{eq:mu(Q,-k)}
  \mu(Q,-k) =
  w\,
  |d_{\mathcal O}|^{\frac{n+1}2}
  \frac{k^{1-\frac{r}{2}(n+1)} |\det Q|^{\frac{r}{2}}}
  {\mathrm{vol}(\Gamma_Q\backslash\mathrm H_\mathbb{F}^n(Q))}
  \frac{\Gamma(\frac{r}{2})}{\pi^{\frac{r}{2}}}
  \sum_{y\in F} |\Gamma_{Q,y}|^{-1}.
\end{equation}

Now, we will recall Siegel's main theorem for indefinite quadratic and hermitian forms (see \cite[Thm.~1]{SiegelIndefinite} and \cite[Thm.~7]{Raghavan}).
For every rational prime $p$, the \emph{$p$-adic density of representation of $-k$ by $Q$} is
\begin{equation}\label{eq:densidad}
\delta_p(Q,-k)=
\lim_{j\to\infty} p^{-j(r (n+1)-1)}\;\# \{x\in{\big(\mathcal O /p^j\mathcal O\big)}^{n+1}: Q[x]\equiv -k\;\bmod p^j \}.
\end{equation}
Define $\delta(Q,-k)=\prod_{p} \delta_p(Q,-k)$, the \emph{local density}, where the product is over all prime numbers.

\begin{theorem}[\emph{Siegel's mass formula}]\label{thm:Siegel'sMass}
Let $Q$ be an $\mathbb{F}$-hermitian form as in \eqref{eq:Q} with $n\geq2$ and let $k$ be a positive integer.
Then
\begin{equation}\label{eq:Siegel'sMass}
\mu(Q,-k)=\delta(Q,-k).
\end{equation}
\end{theorem}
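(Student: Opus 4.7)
The plan is to prove the mass formula by realizing both sides of \eqref{eq:Siegel'sMass} as factors of a single adelic integral, following the strategy of \cite{SiegelIndefinite} and its hermitian extension by \cite{Raghavan,Ramanathan}. First I would regard $H=\mathrm{U}(Q,\mathbb{F})$ as an algebraic group acting transitively (over the algebraic closure) on the affine variety $\Omega_k=\{x:Q[x]=-k\}$, with finite stabilizer at each rational point. A top-degree invariant differential form on $\Omega_k$ specializes at each place $v$ to an invariant local measure: at the archimedean place it agrees, up to the normalizing factor $|\det W/\det Q|^{(2-r)/2}$, with the measure $d\omega^*$ used in defining $\mu(y,Q)$; at a finite prime $p$, a standard $p$-adic integration argument (Siegel's Hilfssatz, cf.\ \cite[Ch.~IV]{SiegelTata}) identifies the volume of $\Omega_k(\mathbb{Z}_p)$ with the local density $\delta_p(Q,-k)$ of \eqref{eq:densidad}.

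Second, I would compute the Tamagawa volume of $H(\mathbb{Q})\backslash H(\mathbb{A})$ in two different ways. On one hand, strong approximation for the simply connected cover $\mathrm{SU}(Q,\mathbb{F})$ (available by Kneser--Platonov since the group is of real rank one and $n\geq 2$), combined with the known value of its Tamagawa number, factors the global volume as an Euler product whose non-archimedean part is $\prod_p\delta_p(Q,-k)=\delta(Q,-k)$. On the other hand, partitioning $\Omega_k(\mathcal O)$ into $\Gamma_Q$-orbits indexed by $F$ and integrating the invariant measure over a fundamental domain for $\Gamma_{Q,y}$ on $\Omega(Q,W;y)$ produces the weighted sum $\sum_{y\in F}|\Gamma_{Q,y}|^{-1}\,\mathrm{vol}(\mathcal F_y)$, which, after normalization by $\mu(Q)$ and the discriminant factors $(r^2/|d_{\mathcal O}|)^{\bullet}$ entering \eqref{eq:volS} and \eqref{eq:volS*}, is precisely $\mu(Q,-k)$ as expressed in \eqref{eq:mu(Q,-k)}. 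Equating the two evaluations and cancelling the archimedean constants gives \eqref{eq:Siegel'sMass}.

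The hard part is the computation of the Tamagawa number and the accurate tracking of normalizations. One must verify $\tau(\mathrm{SU}(Q))=1$ in the hermitian case (by Kottwitz's theorem, following Weil's conjecture) and handle the factor of two coming from $\tau(\mathrm{SO}(Q))=2$ in the orthogonal case, so that the archimedean constant appearing in the product formula collapses to one. Equally delicate is the matching of archimedean volumes with the specific normalizations $(r^2/|d_{\mathcal O}|)^{n(n+1)/4}$ together with the factor $\pi^{r/2}/\Gamma(r/2)\prod_{j=1}^{n}\pi^{rj/2}/\Gamma(rj/2)$ appearing in \eqref{eq:volS} and \eqref{eq:volS*}. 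Siegel accomplishes this by explicit reduction theory for indefinite quadratic forms, and Raghavan by a parallel analysis of $\mathbb{C}$-hermitian reduction theory; both are nontrivial technical inputs drawn from the theory of indefinite forms that the present paper takes as a black box.
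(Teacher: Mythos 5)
Your proposal is not the paper's argument at all, and you should be aware that the paper does not really contain one: the ``proof'' of Theorem~\ref{thm:Siegel'sMass} is the citation of \cite[Thm.~1]{SiegelIndefinite} for $\mathbb{F}=\mathbb{R}$ and \cite[Thm.~7]{Raghavan} for $\mathbb{F}=\mathbb{C}$, i.e.\ the classical analytic proofs (theta series with majorants, reduction theory for quadratic and $\mathbb{C}$-hermitian forms, and a limiting process from the definite case), which the paper then uses only through the explicit volume normalizations \eqref{eq:volS} and \eqref{eq:volS*} drawn from \cite{SiegelTata}, \cite{Ramanathan} and \cite{Raghavan}. What you sketch instead is Weil's adelic reinterpretation via Tamagawa measures. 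That is a legitimate and genuinely different route, and it has real advantages here: it treats $\mathbb{R}$ and $\mathbb{C}$ uniformly, it explains the local densities \eqref{eq:densidad} conceptually as $p$-adic volumes, and it is the natural framework in which to attack the quaternionic case that the paper explicitly leaves open (the missing ``Siegel theory for indefinite $\mathbb H$-hermitian forms''). The price is that you replace a single classical citation by much heavier inputs (strong approximation, the Weil conjecture on Tamagawa numbers), and the delicate archimedean bookkeeping against $(r^2/|d_{\mathcal O}|)^{\bullet}$ and the $\Gamma$-factors --- which is the only part of the theorem the rest of Section~4 actually consumes --- is still outsourced to the same classical sources.

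Two points in the sketch are genuinely defective as written. First, the stabilizer of a rational point $y$ with $Q[y]=-k$ in the algebraic group $\mathrm U(Q,\mathbb{F})$ is \emph{not} finite: it is the unitary group of the positive definite form $Q|_{y^\perp}$, a positive-dimensional group with compact real points; only the arithmetic stabilizer $\Gamma_{Q,y}$ is finite. In the adelic orbit decomposition the Tamagawa number and local volumes of this stabilizer enter on the same footing as those of $\mathrm U(Q,\mathbb{F})$ itself (for instance the two factors of $2$ from the Tamagawa numbers of the ambient and stabilizing special orthogonal groups must cancel against each other), and your sketch has no slot for this. Second, and more seriously, in the real case $\mathrm{SU}(Q,\mathbb{R})=\mathrm{SO}(Q)$ is not simply connected, so strong approximation holds only for $\mathrm{Spin}(Q)$; descending to $\mathrm{SO}(Q)$ controls integral representations only up to spinor genus. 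For $n\geq3$ (at least four variables) this costs nothing, but the theorem is asserted for $n\geq2$, and for ternary forms the stabilizer degenerates to a one-dimensional torus, the product $\prod_p\delta_p(Q,-k)$ converges only conditionally, and spinor-exceptional representations are a genuine phenomenon: your argument then yields the identity \eqref{eq:Siegel'sMass} only after averaging over a spinor genus, and closing the class-by-class statement at $n=2$ requires precisely the additional analysis that the cited classical proofs supply. So the proposal is a viable modern strategy for $n\geq3$, but as it stands it does not recover the full range of the statement.
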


See \cite[Thm.~1]{SiegelIndefinite} for the real case and \cite[Thm.~7]{Raghavan} for the complex case.

By combining equation \eqref{eq:mu(Q,-k)} and Theorem~\ref{thm:Siegel'sMass}, we obtain an expression for the term $\sum_{y\in F} |\Gamma_{Q,y}|^{-1}$, the main goal of this section.

\begin{corollary}\label{cor:term}
We have that
$$
\sum_{y\in F} |\Gamma_{Q,y}|^{-1}=
w^{-1}
|d_{\mathcal O}|^{-\frac{n+1}2}
\frac{k^{\frac{r}{2}(n+1)-1}}{|\det Q|^{\frac{r}{2}}}
\mathrm{vol}(\Gamma_Q\backslash\mathrm H_\mathbb{F}^n(Q))\;
\frac{\pi^{\frac{r}{2}}}{\Gamma(\frac{r}{2})}
\delta(Q,-k)
$$
\end{corollary}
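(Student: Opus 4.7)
The plan is almost purely algebraic: the corollary is obtained by solving formula \eqref{eq:mu(Q,-k)} for $\sum_{y\in F} |\Gamma_{Q,y}|^{-1}$, and then invoking Siegel's mass formula (Theorem~\ref{thm:Siegel'sMass}) to replace $\mu(Q,-k)$ by the local density $\delta(Q,-k)$. All of the real analytic content has already been extracted: \eqref{eq:mu(Q,-k)} itself was derived from Siegel's formulas \eqref{eq:volS} and \eqref{eq:volS*} for $\mu(Q)$ and $\mu(y,Q)$, and Theorem~\ref{thm:Siegel'sMass} is imported from \cite{SiegelIndefinite} and \cite{Raghavan}.

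Concretely, I would first rewrite \eqref{eq:mu(Q,-k)} in the form
\[
\sum_{y\in F}|\Gamma_{Q,y}|^{-1}
= w^{-1}\,|d_{\mathcal O}|^{-(n+1)/2}\,\frac{k^{r(n+1)/2-1}}{|\det Q|^{r/2}}\,\mathrm{vol}(\Gamma_Q\backslash \mathrm H_{\mathbb{F}}^{n}(Q))\,\frac{\pi^{r/2}}{\Gamma(r/2)}\,\mu(Q,-k),
\]
which is just \eqref{eq:mu(Q,-k)} with every factor on the right-hand side moved to the opposite side. The only bookkeeping is that the exponent of $k$ flips sign ($1-r(n+1)/2 \leadsto r(n+1)/2-1$), the exponent of $|d_{\mathcal O}|$ flips sign, $|\det Q|^{r/2}$ moves to the denominator, $\mathrm{vol}(\Gamma_Q\backslash \mathrm H_{\mathbb{F}}^{n}(Q))$ migrates from the denominator to the numerator, $w$ becomes $w^{-1}$, and $\Gamma(r/2)/\pi^{r/2}$ inverts. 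Then I would apply Theorem~\ref{thm:Siegel'sMass} to substitute $\delta(Q,-k)$ for $\mu(Q,-k)$, yielding precisely the formula claimed in the corollary.

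There is no substantive obstacle; the only care required is tracking exponents correctly in the rearrangement. The standing assumption $\mathbb{F}=\mathbb{R}$ or $\mathbb{C}$ (imposed at the start of the section) is needed for both \eqref{eq:mu(Q,-k)} and Siegel's mass formula, and the hypothesis $n\geq 2$ of Theorem~\ref{thm:Siegel'sMass} is likewise inherited by the corollary.
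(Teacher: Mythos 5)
Your proposal is correct and coincides with the paper's own proof: the corollary is obtained exactly by solving \eqref{eq:mu(Q,-k)} for $\sum_{y\in F}|\Gamma_{Q,y}|^{-1}$ and substituting $\delta(Q,-k)$ for $\mu(Q,-k)$ via Theorem~\ref{thm:Siegel'sMass}. Your exponent bookkeeping and the remark that the standing assumptions ($\mathbb{F}=\mathbb{R}$ or $\mathbb{C}$, $n\geq 2$) are inherited from \eqref{eq:mu(Q,-k)} and Siegel's mass formula are both accurate.
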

Note that ${\pi^{r/2}}/{\Gamma(r/2)}=1,\pi,\pi^2$ for $\mathbb{F}=\mathbb{R},\mathbb{C}$ and $\mathbb H$ respectively.

\section{Main theorem}

We can now state the main result in this paper, which follows by combining Proposition~\ref{prop:main3} and Corollary~\ref{cor:term}. The last assertion is a consequence of Remark~\ref{rmk:lambda_1}.
We first recall some terminology: $r=\dim_\mathbb{R}(\mathbb{F})$, $\rho=(n+1)\,r/2-1$ and $\mathcal O$ is a maximal order in $\mathbb{F}$ with discriminant $d_{\mathcal O}$.

\begin{theorem}\label{thm:main4}
Let $Q$ be an $\mathbb{F}$-hermitian matrix as in \eqref{eq:Q}, with $n\geq2$ and $\mathbb{F}=\mathbb{R}$ or $\mathbb{C}$.
We fix $k\in\mathbb{N}$ such that $-k$ is represented by $Q$.
Then, the number $N_t(Q,-k)$ of elements $x\in\mathcal O^{n+1}$ such that $Q[x]=-k$ and $|x_{n+1}|\leq t$, satisfies the following asymptotic estimate as $t\to+\infty$,
\begin{equation}\label{eq:main4}
    N_t(Q,-k)=
    \frac{2^{(r-1)(n+1)}}{|d_{\mathcal O}|^{\frac{n+1}2}}
    \frac{a^\rho\,\mathrm{vol}(S^{nr-1})}{2\rho\,|\det Q|^{\frac{r}{2}}}
    \frac{\pi^{\frac{r}{2}}}{\Gamma(\frac{r}{2})}\;
    \delta(Q,-k)\;
    t^{2\rho} + O(t^{\tau}),
\end{equation}
where $\tau$ is as in \eqref{eq:tau}.
Moreover, when $\mathbb{F}=\mathbb{R}$ and $n>2$, the formula \eqref{eq:main3} holds for $\tau=n-3/2$.
\end{theorem}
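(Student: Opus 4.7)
The plan is to obtain \eqref{eq:main4} simply by substituting the expression for $\sum_{y\in F}|\Gamma_{Q,y}|^{-1}$ provided by Corollary~\ref{cor:term} into the asymptotic formula \eqref{eq:main3} from Proposition~\ref{prop:main3}. All the analytic work has already been absorbed into these two results: the lattice point theorem contributes the leading exponential and the error term, and Siegel's mass formula converts the orbital sum into the local density $\delta(Q,-k)$. What remains is bookkeeping.

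Concretely, I would first plug the right hand side of Corollary~\ref{cor:term} into \eqref{eq:main3}. Three cancellations occur simultaneously: (a) the factor $\mathrm{vol}(\Gamma_Q\backslash\mathrm H_\mathbb{F}^n(Q))$ appearing in the denominator of \eqref{eq:main3} and in the numerator of Corollary~\ref{cor:term} cancels; (b) since $\rho=\tfrac{r(n+1)}2-1$, the exponent $\tfrac{r}{2}(n+1)-1$ in the $k$-factor of the corollary equals $\rho$, so it cancels $k^{-\rho}$ coming from Proposition~\ref{prop:main3}; and (c) the prefactor $\widetilde w/w$ equals $1$, since $\widetilde w=w$ in both cases $\mathbb{F}=\mathbb{R},\mathbb{C}$ by definition. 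One is left with the factor $|\det Q|^{-r/2}$, the factor $|d_{\mathcal O}|^{-(n+1)/2}$, and the Gamma factor $\pi^{r/2}/\Gamma(r/2)$, all matching \eqref{eq:main4}.

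The only non-trivial algebraic step is to rewrite the power of $2$. From \eqref{eq:main3} the exponent is $2\rho-(n-1)$; using $2\rho=r(n+1)-2$ this becomes $r(n+1)-2-(n-1)=(r-1)(n+1)$, which is exactly the exponent appearing in \eqref{eq:main4}. The factor $a^{\rho}\,\mathrm{vol}(S^{nr-1})/(2\rho)$ carries over unchanged. Assembling the pieces yields the claimed main coefficient, while the error term $O(t^\tau)$ is inherited unchanged from Proposition~\ref{prop:main3}.

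For the final sentence, specializing to $\mathbb{F}=\mathbb{R}$ with $n\geq 3$ and invoking Theorem~\ref{thm:lambda_1} together with Remark~\ref{rmk:lambda_1}, the first nonzero eigenvalue on the congruence quotient $\Gamma_Q^0\backslash\mathrm H_\mathbb{R}^n(Q)$ satisfies $\nu_1\leq(n-2)/2$ and the condition $\rho+\nu_1\geq 2\rho\,\tfrac{n}{n+1}$ holds, so definition \eqref{eq:tau} gives $\tau=\rho+\nu_1\leq\tfrac{n-1}2+\tfrac{n-2}2=n-\tfrac32$. Since no step beyond the algebraic consolidation is required, the main (minor) obstacle is simply verifying the cancellations and the exponent identity $2\rho-(n-1)=(r-1)(n+1)$; no additional analysis is needed.
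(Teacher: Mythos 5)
Your proposal is correct and takes essentially the same route as the paper, whose entire proof is the substitution of Corollary~\ref{cor:term} into Proposition~\ref{prop:main3} (with the cancellations of $\mathrm{vol}(\Gamma_Q\backslash\mathrm H_\mathbb{F}^n(Q))$ and $k^{\rho}$, the identity $\widetilde w/w=1$ for $\mathbb{F}=\mathbb{R},\mathbb{C}$, and the exponent computation $2\rho-(n-1)=(r-1)(n+1)$), together with Remark~\ref{rmk:lambda_1} for the claim $\tau=n-\tfrac32$. One small refinement: the inequality $\rho+\nu_1\geq 2\rho\tfrac{n}{n+1}$ need not hold for every admissible $\nu_1$ (it is verified in the worst case $\nu_1=\tfrac{n-2}2$), but when it fails definition \eqref{eq:tau} gives $\tau=2\rho\tfrac{n}{n+1}=\tfrac{n(n-1)}{n+1}\leq n-\tfrac32$ for $n\geq3$, so the bound $O(t^{n-3/2})$ holds in all cases, exactly as in the paper's remark.
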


\begin{remark}
In order to get an explicit value of the main term in \eqref{eq:main4} for a fixed $\mathbb{F}$-hermitian form $Q$, one needs to determine $\delta(Q,-k)$.
When $\mathbb{F}=\mathbb{R}$, T.~Yang \cite{Yang} computed this local density for any quadratic form $Q$. For $\mathbb{F}=\mathbb{C}$ see \cite{Hironaka}.
\end{remark}

\begin{remark}\label{rmk:norms}
We consider the real norm $\|\cdot\|$ induced by the $\mathbb{F}$-hermitian form $\left(\begin{smallmatrix}A&\\&a\end{smallmatrix}\right)$,
i.e.\ $\|x\|^2 = A[\hat x]+a|x_{n+1}|^2$, where $\hat x=(x_1,\dots,x_n)^t$.
We claim that Theorem~\ref{thm:main4} provides an asymptotic formula with error term for $\widetilde N_s(Q,-k)$ (see~\eqref{eq:intro-N_s(Q,k)})
as $s\to+\infty$, of the number of solutions lying in the Euclidean ball of radius $s$.
Indeed, it is clear that
$$
\left.
\begin{array}{r@{\;}c@{\;}l}
  Q[x]    &=& A[\hat x]-a|x_{n+1}|^2 = -k \\[1mm]
  \|x\|^2 &=& A[\hat x]+a|x_{n+1}|^2 \leq s^2
\end{array}
\right\}
\quad \text{ imply }\quad
|x_{n+1}| \leq \sqrt{\frac{s^2+k}{2}}=:t_s.
$$
Hence, by Theorem~\ref{thm:main4}, we have
\begin{equation}\label{eq:tilde_N-1st}
\widetilde N_s(Q,-k) = N_{t_s}(Q,-k) = C_{Q,k}\; t_s^{2\rho} + O(t_s^{\tau}),
\end{equation}
where $C_{Q,k}$ denotes the main coefficient of \eqref{eq:main4}.
By Taylor's expansion at $s=\infty$, $t_s = s/\sqrt{2} +O(s^{-1})$,
thus we can replace $O(t_s^{\tau})$ by $O(s^{\tau})$ in \eqref{eq:tilde_N-1st}.
Moreover, $t_s^{2\rho} = s^{2\rho}/2^{\rho}+O(s^{2\rho-2})$.
But $\tau \geq 2\rho\, n/(n+1)+\varepsilon$ by \eqref{eq:tau}, and so it follows immediately that $\tau-(2\rho-2)>0$ for $\mathbb{F}=\mathbb{R}$ and $\mathbb{C}$.
Therefore, \eqref{eq:tilde_N-1st} reduces to
\begin{equation}
\widetilde N_s(Q,-k) = 2^{-\rho}\, C_{Q,k} \; s^{2\rho} + O(s^{\tau}).
\end{equation}
\end{remark}

\begin{example}
The case $\mathbb{F}=\mathbb{R}$ and
$
Q=I_{n,1}=\left(\begin{smallmatrix}I_n&\\&-1\end{smallmatrix}\right)
$
was considered by J.~Ratcliffe and S.~Tschantz \cite{Ratcliffe-Tschantz}.
We have $r=1$, $|d_{\mathcal O}|=1$, $a=1$, $|\det(I_{n,1})|=1$ and $\rho=(n-1)/2$. Theorem~\ref{thm:main4} now yields
\begin{equation}
    N_t(I_{n,1},-k)=
    \frac{\mathrm{vol}(S^{n-1})}{n-1}\;
    \delta(I_{n,1},-k)\;
    t^{n-1} + O(t^{n-3/2}).
\end{equation}
In \cite[Thm.~12]{Ratcliffe-Tschantz} there is an explicit formula for the local density $\delta(I_{n,1},-k)$.
\end{example}

\begin{example}
We conclude the article by considering the case of the Lorentzian hermitian form over the Gaussian integers, i.e.\ $Q=I_{n,1}$ for $\mathbb{F}=\mathbb{C}$ and $\mathcal O=\mathbb{Z}[\sqrt{-1}]$.
We have $r=2$, $\rho=n$, $d_{\mathcal O}=-4$, $a=1$ and $|\det(I_{n,1})|=1$.
Furthermore, $\mathrm{vol}(S^{2n-1})=2\pi^{n}/(n-1)!$.
Theorem~\ref{thm:main4} now implies that
\begin{align}\label{eq:Lorentzianhermitian}
    N_t(I_{n,1},-k) = \dfrac{\pi^{n+1}}{n!}\; \delta(I_{n,1},-k)\; t^{2n} + O(t^{\tau}).
\end{align}
The local density $\delta(I_{n,1},-k)$ is explicitly computable.
For example, when $n=2$ and $k=1$ we have $\delta(I_{2,1},-1)=2^33\pi^{-3}$ and consequently $N_t(I_{2,1},-1) = 12\, t^{4} + O(t^{\tau}).$
In a future paper we will compute this term for several examples and testing our formula with experimental computations.
\end{example}

\section*{Acknowledgments}
This is part of the author's Ph.D.\ thesis, written under the supervision of Professor Roberto Miatello at the Universidad Nacional de C\'ordoba, Argentina.
The author sincerely thanks Roberto for suggesting the problem and for many helpful discussions concerning the material in this paper.
The author also wishes to express his thanks
 to Wai Kiu Chan, Rainer Schulze-Pillot and Takao Watanabe for several helpful comments and
 to Jorge Lauret and the referee for reading the draft and making helpful suggestions.

\end{document}